%% file: framing_abstract.tex
\documentclass[12pt,a4paper]{article}
\usepackage{amsmath}
\usepackage{amssymb}
\usepackage{amsthm}
\usepackage{fullpage}
\usepackage{graphicx}
\usepackage{calc}
\usepackage[top=1.2in,bottom=1.2in,left=.9in,right=.9in]{geometry}
\usepackage[colorlinks=true,citecolor=red!65!white,linkcolor=red!65!blue,urlcolor=green!65!black,breaklinks=true]{hyperref}
\usepackage[capitalise,nameinlink,noabbrev]{cleveref}
\usepackage{booktabs}
\usepackage{multirow}
\usepackage{color}
\usepackage[sc]{mathpazo}
\usepackage[T1]{fontenc}
\usepackage[margin=10pt,font=small]{caption}
\usepackage{enumitem}

\usepackage{tikz}
\usetikzlibrary{arrows.meta}

\newtheorem{theorem}{Theorem}[section] 
\newtheorem{proposition}[theorem]{Proposition} 
\newtheorem{corollary}[theorem]{Corollary} 
\newtheorem{lemma}[theorem]{Lemma} 

\theoremstyle{definition}
\newtheorem{definition}[theorem]{Definition}
\newtheorem{remark}[theorem]{Remark} 
\newtheorem{example}[theorem]{Example}
\newtheorem{question}{Question}

\usepackage[backend=bibtex,giveninits,block=space,sortcites=true,maxbibnames=99,style=alphabetic]{biblatex}
\usepackage{doi}
\renewbibmacro{in:}{}
\DeclareFieldFormat[article]{volume}{\mkbibbold{#1}}
\DeclareFieldFormat[article]{number}{(#1)}
\DeclareFieldFormat{doi}{\href{https://dx.doi.org/#1}{\textsc{doi}}}
\AtEveryBibitem{\clearfield{issn}\clearfield{isbn}\clearlist{language}}
\renewbibmacro*{volume+number+eid}{%
  \printfield{volume}%
  \printfield{number}%
  \setunit{\bibeidpunct}%
  \printfield{eid}}
\def\abx@missing@entry#1{\abx@missing{#1??}}

\newcommand{\blue}{blue!65!white}
\newcommand{\mydef}[1]{\textcolor{\blue}{{\bf #1}}} 

\newcommand{\G}{\Gamma} 
\newcommand{\F}{F} 
\renewcommand{\c}{{\bf{c}}} 
\renewcommand{\d}{{\bf{d}}} 
\newcommand{\s}{\sigma} 
\newcommand{\sbar}{\overline\sigma} 
\renewcommand{\t}{\tau} 
\newcommand{\tbar}{\overline\tau} 
\newcommand{\up}{\uparrow} 
\newcommand{\down}{\downarrow} 
\renewcommand{\emph}{\textbf}

\newcommand{\inn}{\text{in}}
\newcommand{\out}{\text{out}}

\newcommand{\CCL}{\mathop{CC}_L} 
\newcommand{\CCR}{\mathop{CC}_R} 
\DeclareMathOperator{\DownBricks}{DownBricks}
\DeclareMathOperator{\UpBricks}{UpBricks}

\usepackage{authblk}

\author[1]{Jonah Berggren\thanks{\href{mailto:jrberggren@uky.edu}{jrberggren@uky.edu}. Jonah Berggren was partially supported by NSF grant DMS-2054255.}}
\author[2, 3]{Clément Chenevière\thanks{\href{mailto:clement.cheneviere@univ-eiffel.fr}{clement.cheneviere@univ-eiffel.fr}. Clément Chenevière was partially supported by the ANR--FWF project PAGCAP (ANR-21-CE48-0020).}}
\affil[1]{\small \itshape  University of Kentucky, Lexington, KY, United States}
\affil[2]{\small \itshape Université Paris-Saclay, CNRS, LISN, Orsay, France}
\affil[3]{\small \itshape Université Gustave Eiffel, LIGM, Champs-sur-Marne, France}

\title{Extended abstract:\\ Canonical join complex and cubical coordinates for all framing lattices}
\begin{document}

\maketitle

\abstract{%
	This document is an extended abstract for two articles in preparation.

	Recently, framing lattices were introduced to generalize many classical lattices such as the Tamari lattice and the weak order on $\mathfrak{S}_n$.
	We define bricks and brick cliques as a combinatorial model for join-irreducible elements and canonical join representations in all framing lattices, generalizing noncrossing arc diagrams of (Reading, 2015) for the weak order on $\mathfrak{S}_n$.
	Our model captures the natural bijection between join and meet canonical representations, as well as duality upon reflections of the framed graph.
	The proof is bijective, with an explicit reconstruction algorithm in two steps. A useful intermediate construction in our bijective proof is our new definition of cornered cliques.
	These enable us to define cubical coordinates on a framing lattice, generalizing bracket vectors for the Tamari lattice and providing an efficient comparison criterion.
}

\newpage

\section{Introduction}

Flow polytopes model the space of \emph{flows} on a directed acyclic graph, and are a fundamental object of combinatorial optimization with relations to many fields such as representation theory~\cite{WV,WIWT}, Grothendieck polynomials~\cite{LMD}, and algebraic geometry~\cite{EM}.
Danilov, Karzanov, and Koshevoy~\cite{DKK} defined special \emph{framing triangulations} of flow polytopes given a choice of a local order on all edges of internal vertices, called a framing of the graph. Simplices of the triangulation indexed combinatorially by \emph{maximal cliques} of the framed graph -- i.e., collections of routes which are pairwise noncrossing with respect to the framing. The maximal cliques (hence, the maximal simplices of these framing triangulations) were given the structure of a \emph{framing lattice} by von Bell and Ceballos~\cite{vBC}. Framing lattices are polygonal, semidistributive, and congruence uniform~\cite{vBC}, and they serve as a common generalization of many classical lattices such as the weak order on the symmetric group~\cite{Y2} and the Tamari lattice~\cite{Y1}.

It follows from semidistributivity that a framing lattice admits a \emph{canonical join complex}, which is the simplicial complex of canonical join representations of elements of the lattice~\cite{Barnard}. For some semidistributive lattices, such as the weak order on the symmetric group and the Tamari lattice, Reading introduced noncrossing arc diagrams as a combinatorial model for the canonical join complex~\cite{Reading}. In this abstract, we generalize the model of Reading to all framing lattices by introducing collections of noncrossing \emph{bricks}, henceforth referred to as \emph{brick cliques}, as a combinatorial model for canonical join representations. 
Covering relations in the framing lattice are naturally labeled by bricks, and the collection of labels of lower covers of a maximal clique forms its canonical join representation. 
Conversely, we build an explicit algorithm to reconstruct the maximal clique associated to a brick clique.

Moreover, we introduce \emph{left-cornered cliques} (dually, right-cornered cliques) as an intermediate object in our bijection between brick cliques and maximal cliques.
Hence, elements of the framing lattice may be equivalently described by maximal cliques, left-cornered cliques, or brick cliques. 
Through a (co)rank map on the left-cornered clique representations, we obtain \emph{cubical coordinates} on the framing lattice which are analogues of bracket vectors for the Tamari lattice~\cite{HT}, as comparison in the framing lattice translates to componentwise comparison in these coordinates. In the process, we attach to each maximal clique of routes a set of left-clockwise bricks, which play a role analogue to inversions for the weak order. 

The main objects and bijections involved in this abstract are summarized in ~\cref{fig:results_scheme}. We only focus on the bottom left quarter of the diagram, as the other parts follow by left-right and up-down reflections of the framed graph (each of which dualize the framing lattice). \cref{fig:intro} displays an example of the framing lattice on a small framed graph drawn using its maximal cliques (left), left-cornered cliques (middle), and brick cliques (right), with the embedding reflecting our cubical coordinates.

\begin{figure}
    \centering
    \scalebox{0.8}{
        \begin{tikzpicture}[scale=1.16]
            \node[draw,inner sep=1ex, align=center, text width=9em] (MaxCliques) at (0,0) {Maximal cliques of routes};
            \node[draw,inner sep=1ex, align=center, text width=9em] (LeftCornered) at (-7,0) {Left-cornered cliques};
            \node[draw,inner sep=1ex, align=center, text width=9em] (RightCornered) at (7,0) {Right-cornered cliques};
            \node[draw,inner sep=1ex, align=center, text width=9em] (UpBricks) at (0,4) {Brick cliques (canonical meet representation)};
            \node[draw,inner sep=1ex, align=center, text width=9em] (DownBricks) at (0,-4) {Brick cliques (canonical join representation)};
            \node[draw,inner sep=1ex, align=center, text width=9em] (LeftCoords) at (-7,-4) {Left cubical\\coordinates $\CCL(\Delta)$};
            \node[draw,inner sep=1ex, align=center, text width=9em] (RightCoords) at (7,-4) {Right cubical\\coordinates $\CCR(\Delta)$};

            \draw[-{>[length=2mm, width=3mm]}] (MaxCliques) to node[midway, fill=white] {Up labels} (UpBricks);
            \draw[-{>[length=2mm, width=3mm]}] (MaxCliques) to node[midway, fill=white, align=center] {Down labels \\ (\cref{thm:bijection_bricks})} (DownBricks);

            \draw[-{>[length=2mm, width=3mm]}] (MaxCliques) to node[midway, above] {Left-cornering map} node[midway, below] {$\Sigma_L$ (\cref{prop:left-cornering-inverse})} (LeftCornered);
            \draw[-{>[length=2mm, width=3mm]}] (MaxCliques) to node[midway, above] {Right-cornering} node[midway, below] {map $\Sigma_R$} (RightCornered);

            \draw[-{>[length=2mm, width=3mm]}, dashed, bend right=25] (UpBricks.west) to node[midway, above, sloped] {Reconstruction $\Phi^*_L$ (step 1)} (LeftCornered.north);
            \draw[-{>[length=2mm, width=3mm]}, dashed, bend left=25] (UpBricks.east) to node[midway, above, sloped] {Reconstruction $\Phi^*_R$ (step 1)} (RightCornered.north);
            \draw[-{>[length=2mm, width=3mm]}, dashed, bend left=25] (DownBricks.west) to node[midway, below, sloped] {Reconstruction $\Phi_L$ (step 1)} node[midway, above, sloped] {(\cref{def:first_reconstruction})} (LeftCornered.south);
            \draw[-{>[length=2mm, width=3mm]}, dashed, bend right=25] (DownBricks.east) to node[midway, below, sloped] {Reconstruction $\Phi_R$ (step 1)} (RightCornered.south);

            \draw[-{>[length=2mm, width=3mm]}, dashed, bend left=25] (LeftCornered.45) to node[midway, above, sloped] {Reconstruction $\Psi^*_L$ (step 2)} (MaxCliques.135);
            \draw[-{>[length=2mm, width=3mm]}, dashed, bend right=25] (RightCornered.135) to node[midway, above, sloped] {Reconstruction $\Psi^*_R$ (step 2)} (MaxCliques.45);
            \draw[-{>[length=2mm, width=3mm]}, dashed, bend right=25] (LeftCornered.-45) to node[midway, below, sloped] {Reconstruction $\Psi_L$ (step 2)} node[midway, above, sloped]{(\cref{def:second_reconstruction})} (MaxCliques.-135);
            \draw[-{>[length=2mm, width=3mm]}, dashed, bend left=25] (RightCornered.-135) to node[midway, below, sloped] {Reconstruction $\Psi_R$ (step 2)} (MaxCliques.-45);

            \draw[-{>[length=2mm, width=3mm]}] (LeftCornered) to node[midway, left] {Corank} (LeftCoords);
            \draw[-{>[length=2mm, width=3mm]}] (RightCornered) to node[midway, right] {Corank} (RightCoords);
        \end{tikzpicture}}
    \caption{Objects and bijections involved in this article. Left-right and up-down reflections of the framed graph explain left-right and up-down reflections of the diagram.}
    \label{fig:results_scheme}
\end{figure}

\begin{figure}
	\centering
	\def\svgscale{.25}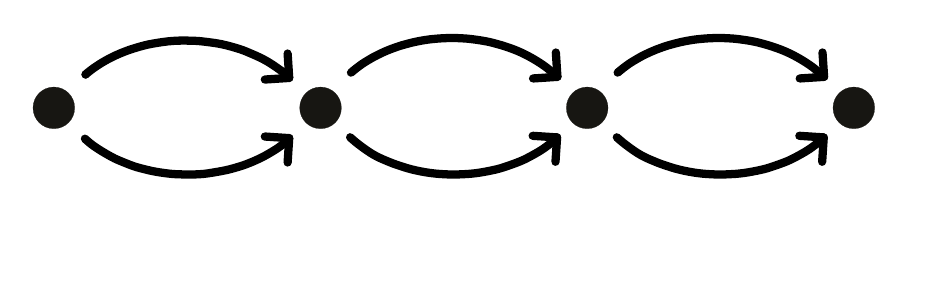

	\def\svgscale{.28}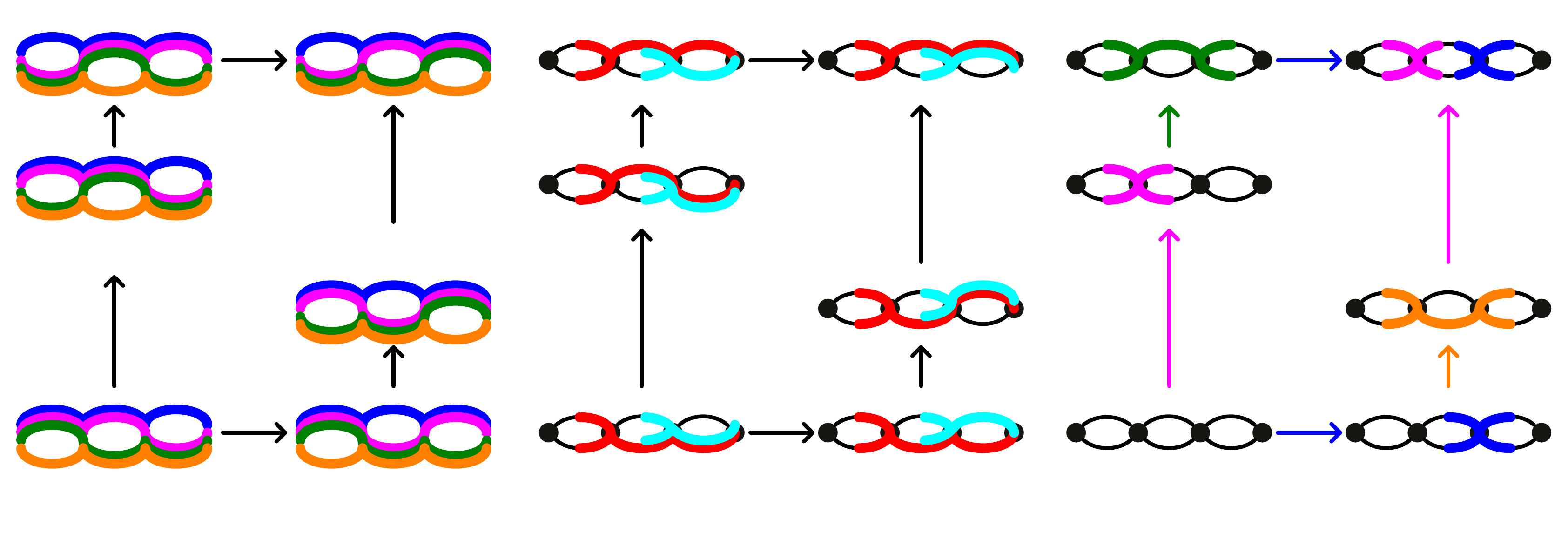
	\caption{The caracol graph of size $3$. The corresponding framing lattice is isomorphic to the weak order on $\mathfrak{S}_3$, displayed with left cubical coordinates on maximal cliques of routes (left), left-cornered cliques (middle) and brick cliques (right). On the right, bricks each have a different color and covering relations are colored accordingly.
	}
	\label{fig:intro}
\end{figure}

\section{Preliminaries on framing lattices}

We start by recalling some background on framing lattices, and we refer to~\cite{vBC} and the references therein for a more detailed account.
A \mydef{flow graph} $G$ is a directed acyclic graph with vertex set $V(G) = [n]$ and a multiset $E(G)$ of edges, all directed from a smaller to a larger vertex. A path from a source to a sink is called a \mydef{route}.

A \mydef{framing} $\F$ of $G$ is a choice at each internal vertex $v$ of total orders on incoming and on outgoing edges at $v$, denoted respectively $<_{\inn(v), \F}$ and $<_{\out(v), \F}$. The pair $\G = (G, \F)$ is called a \mydef{framed graph}. An example of framed graph is displayed at the top of~\cref{fig:intro}.
Following~\cite{vBC}, we draw a framed graph so that incoming (resp. outgoing) edges are increasing in the framing from top to bottom, in which case we may omit the framing.

When two routes share an internal vertex $v$, we can view their suffix after $v$ as words on edges, and use the framing on outgoing edges to compare them in the lexicographic order. This provides a preorder on all routes passing through $v$ called the \mydef{outgoing preorder at $v$} and denoted $\leq^+_{v, \F}$. The \mydef{incoming preorder at $v$} $\leq^-_{v, \F}$ is defined similarly by comparing prefixes.
We can also define an \mydef{outgoing order}~$\leq^+_\F$ as an order on all routes, where two routes starting with different source edges are incomparable, and compared in the corresponding outgoing preorder otherwise. An \mydef{incoming order} $\leq^-_\F$ is defined similarly, and we often omit $\F$ from the notation when it is clear from the context.

If two routes $p$ and $q$ share an internal vertex $v$, $p$ is \mydef{clockwise} to $q$ at $v$ if $p <^-_v q$ and $p >^+_v q$. Two routes are \mydef{coherent at $v$} if neither is clockwise to the other at $v$, and \mydef{coherent} if coherent at every internal vertex $v$.
If $p$ and $q$ are incoherent at $v$, their maximal common subpath containing $v$ is called a \mydef{subroute of conflict}. With the drawing conventions, incoherences appear visually as crossings.

A \mydef{maximal clique} $\Delta$ is a maximal collection of pairwise coherent routes. All maximal cliques have the same number of routes, namely \mbox{$\lvert E(G) \rvert - \lvert V(G) \rvert + s(G) + t(G)$}, where $s(G)$ (resp. $t(G)$) is the number of sources (resp. sinks) of $G$. Two cliques are adjacent when they share all but one route.

\begin{proposition}[{\cite[Lemma 2.3]{vBC}}]\label{prop:adjacent_cliques}
    Let $\Delta$ and $\Delta'$ be two adjacent maximal cliques, and $r$ and $r'$ the routes of $\Delta \setminus \Delta'$ and $\Delta' \setminus \Delta$. The routes $r$ and $r'$ have a unique subroute of conflict $\s$.
	Moreover, writing $r = r_1 \s r_2$ and $r' = r'_1 \s r'_2$, the routes $r_1 \s r'_2$ and $r'_1 \s r_2$ are both in $\Delta \cap \Delta'$.
\end{proposition}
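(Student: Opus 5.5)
Write $C = \Delta \cap \Delta'$. Then $\Delta = C \cup \{r\}$ and $\Delta' = C \cup \{r'\}$. By maximality of $\Delta'$, $r$ is incoherent with some route of $\Delta'$. Since $r$ is coherent with all of $C$, that route must be $r'$, so $r$ and $r'$ are incoherent. Pick an internal vertex $v$ where they are incoherent, and let $\s$ be the maximal common subpath through $v$. Write $r = r_1 \s r_2$ and $r' = r'_1 \s r'_2$. Up to swapping the roles, $r$ is clockwise to $r'$ at $v$: $r_1$ and $r'_1$ enter $\s$ by different edges, $r_2$ and $r'_2$ leave it by different edges, and the two routes cross. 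The plan has two steps.

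\textbf{First step.} Show that the two swapped routes $u = r_1 \s r'_2$ and $u' = r'_1 \s r_2$ are coherent with every route of $C \cup \{r, r'\}$. The basic observation is that at any vertex $w$, the incoming order compares prefixes and the outgoing order compares suffixes.
\begin{itemize}
\item For $q \in C$, suppose $q$ conflicts with $u$ at some vertex $w$. If $w$ lies on $r_1$ or early in $\s$, compare with $r$. If $w$ lies on $r'_2$, compare with $r'$. When the shared subpath of $q$ and $u$ spans the junction $\s$, use the crossing of $r$ and $r'$ along $\s$ to propagate the inequalities. The outcome is that $q$ would be incoherent with $r$ or with $r'$ at a vertex of the common subpath. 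This is the standard transitivity argument for lexicographic preorders, and I expect a case check on where the common subpath of $q$ and $u$ begins and ends relative to $\s$.
\item Coherence of $u$ with $r$ and with $r'$ follows because $u$ shares its prefix with one and its suffix with the other, and they split consistently on each side.
\item Coherence of $u$ with $u'$ is direct: they enter $\s$ in the same relative order as $r$ and $r'$ but leave it swapped, so they do not cross.
\end{itemize}

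\textbf{Second step.} Deduce that $u, u' \in C$. Since $u$ is coherent with $\Delta = C \cup \{r\}$ and $\Delta$ is maximal, $u \in \Delta$. Also $u \neq r$, since $r'_2 \neq r_2$. Hence $u \in C$, and likewise $u' \in C$.

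\textbf{Uniqueness of $\s$.} Suppose $r$ and $r'$ had a second subroute of conflict $\t \neq \s$. Assume $\t$ lies after $\s$ along $r$; routes are paths in a DAG, and distinct maximal common subpaths are disjoint and ordered along both routes. Then $u = r_1 \s r'_2$ contains $\t$ inside $r'_2$, and I would compare $u$ with $r$ at $\t$. The prefixes of $u$ and $r$ agree up to the end of $\s$, and the suffixes after $\t$ differ exactly as $r'$ differs from $r$. The entry order into $\t$ is decided at the last divergence before $\t$, and this needs a short argument that the divergence occurs after $\s$. Granting it, $u$ and $r$ are incoherent at $\t$ exactly as $r'$ and $r$ are. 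This contradicts the fact, already shown, that $u \in C \subseteq \Delta$ is coherent with $r$. The case where $\t$ precedes $\s$ is symmetric, using $u'$.

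\textbf{Main obstacle.} I expect the hardest part to be the first step's case analysis for $q \in C$ crossing $u$: I must show the crossing is inherited by $r$ or $r'$. This is where I will be careful about the lexicographic comparisons when the shared subpath of $q$ and $u$ straddles $\s$.
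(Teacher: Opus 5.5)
There is a genuine gap, and it is a circularity rather than the case check you flagged. In your first step you assert that $u=r_1\sigma r_2'$ is coherent with $r$ and with $r'$ because the routes ``split consistently on each side''. For an arbitrarily chosen conflict $\sigma$, this is exactly the statement that $r_2,r_2'$ do not cross after $\sigma$ and $r_1,r_1'$ do not cross before it, which is uniqueness. Your own uniqueness paragraph shows this. A second conflict $\tau$ after $\sigma$ makes $u$ and $r$ incoherent at $\tau$. Likewise, a conflict before $\sigma$ makes $u$ and $r'$ incoherent, since there the incoming comparison of $u$ and $r'$ is that of $r$ and $r'$, and so is the outgoing one. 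So you use coherence of $u$ with $r$ to get $u\in C$, and then use $u\in C$ to exclude exactly the configurations in which that coherence fails. As written, nothing is proved. The same objection applies to your claim that $u$ and $u'$ are coherent, which you do not need anyway.

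The repair is to choose $\sigma$ extremally. Since $G$ is acyclic, the maximal common subpaths of $r$ and $r'$ appear in the same order along both routes; let $\sigma$ be the \emph{last} subroute of conflict. Incoherences between $u$ and $r$ can only occur at common vertices after $\sigma$. By your transfer argument, these are incoherences of $r'$ and $r$ there, and there are none, so $u$ is coherent with $r$. With your first bullet, $u$ is coherent with all of $\Delta=C\cup\{r\}$. Hence $u\in\Delta$ and $u\neq r$, so $u\in C\subseteq\Delta'$, and $u$ is coherent with $r'$. Since $u$ and $r'$ share the suffix $\sigma r_2'$, their incoherences occur before $\sigma$. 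There the incoming comparison is that of $r$ with $r'$, and the outgoing comparison is decided before entering $\sigma$, exactly as for $r$ with $r'$. Thus $r,r'$ have no conflict before $\sigma$ either, which gives uniqueness. Only now is $u'=r_1'\sigma r_2$ coherent with $r$, and the same maximality argument gives $u'\in C$.

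Your first bullet is fine without any uniqueness hypothesis. If the maximal common subpath of $q$ and $u$ does not contain $\sigma$, its entering and leaving edges on $u$ both lie on $r$ or both lie on $r'$. If it contains $\sigma$, then $q$ compares with $u$ as with $r$ on the incoming side and as with $r'$ on the outgoing side. An incoherence of $q$ with $u$, combined with coherence of $q$ with $r$ and $r'$ and with $r<^-_v r'$, $r>^+_v r'$ at $v\in\sigma$, then produces a strict cycle in a total preorder at $v$.
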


We can build a directed graph on maximal cliques with edges between adjacent cliques. Using the notations of~\cref{prop:adjacent_cliques}, the edge is oriented from $\Delta$ to $\Delta'$ if $r$ is clockwise to $r'$ at their subroute of conflict, and replacing $r$ with $r'$ is called a \mydef{counterclockwise rotation}. von Bell and Ceballos proved that this directed graph is the Hasse diagram of a semidistributive lattice called the \mydef{framing lattice} $\mathcal L_\Gamma$~\cite{vBC}.

The \mydef{up-down reflection} of the framed graph $\G$ is the framed graph $\G^* = (G, F^*)$, where the underlying graph is unchanged but the outgoing and incoming orders at each vertex are reversed. Similarly, the \mydef{left-right reflection} of $\G$ is the framed graph $\G^T = (G^T, F^T)$, where $G^T$ is the graph obtained from $G$ by reversing all edges, and the incoming and outgoing orders at each vertex have been exchanged. Up-down and left-right reflections correspond to the horizontal and vertical reflections of our pictures, respectively. Both of these operations dualize the framing lattice.

\begin{example}\label{ex:weak_order}
	The weak order on the symmetric group $\mathfrak{S}_n$ can be realized as a framing lattice on the oruga graph, which has vertex set $[0, n]$ and two edges $(i-1,i)$ for all $i$. The framing is such that the outgoing order at vertex $i$ matches the incoming order at vertex $i+1$.
	Routes on the oruga graph can be seen as binary words where going down is a $0$ and going up is a $1$. Given a permutation with one-line notation $i_1 \dots i_n$, we associate the collection $\{p_0, \dots, p_k\}$ of binary words, where $p_k$ has $k$ ones, at positions $i_1, \dots, i_k$.
	An example for $n = 3$ can be seen in~\cref{fig:intro}, where on the left, the permutations are displayed as cliques of coherent routes and with their one-line notation.
\end{example}

\section{Bricks and canonical join complex}

Our first contribution is the definition of left and right corners, which we will allow to be the start and end, respectively, of generalized paths of $\G$. 
A \mydef{left corner} of $\G$ is a triple $\c=(\c^\bullet,\c^\down,\c^\up)$ where $\c^\bullet$ is an internal vertex of $G$ and $\c^\down,\c^\up$ are two incoming edges of $\c^\bullet$, consecutive in the framing. A \mydef{right corner} $\c=(\c^\bullet,\c^\down,\c^\up)$ is defined similarly with consecutive outgoing edges of $\c^\bullet$. 
Note that $\c^\up$ is larger than $\c^\down$ in the framing, so it is drawn below $\c^\down$, and arrows in superscript point towards the corner.

We generalize paths by letting paths of $G$ start or end with respectively left or right corners. 
More formally, a \mydef{generalized path} $\sbar$ of $\G$ is a composition $L(\sbar) \s R(\sbar)$ where $\s$ is a path from $v$ to $v'$, $L(\sbar)$ is either $v$ or a left corner with $\left(L(\sbar)\right)^\bullet = v$, and $R(\sbar)$ is either $v'$ or a right corner with $\left(R(\sbar)\right)^\bullet = v'$. When $L(\sbar)$ is a left corner and $R(\sbar)$ is a vertex, we call $\sbar$ a \mydef{left-cornered path}, and a \mydef{left-cornered route} if furthermore $R(\sbar)$ is a sink. Right-cornered paths and routes are defined similarly. We call $\sbar$ a \mydef{brick} if $L(\sbar)$ and $R(\sbar)$ are  corners. Routes, cornered routes and bricks are called \mydef{generalized routes}.

We say that a generalized path $\sbar$ \mydef{passes through} a vertex $v$ if $v$ is a vertex of~$\s$. 
We extend the framing order $\leq_{\inn(v), F}$ to a total order on incoming edges and left corners of $v$ by setting $\c^\down <_{\inn(v), F} \c <_{\inn(v), F} \c^\up$, and similarly for  $\leq_{\out(v), F}$.
We also extend naturally the outgoing and incoming preorders at $v$ to all generalized routes passing through $v$. 

If $\sbar$ and $\tbar$ are two generalized routes passing through $v$, we say that $\sbar$ is \mydef{clockwise} to $\tbar$ at $v$ if $\sbar <^-_v \tbar$ and $\sbar >^+_v \tbar$. They are \mydef{weakly coherent} (or {noncrossing}) if neither is clockwise to the other at any vertex, and they are \mydef{coherent} if they additionally do not share a left or a right corner.
Two generalized routes are incoherent when they have a crossing, as pictured on~\cref{fig:coherence}.
A \mydef{left-cornered clique} is a maximal collection of pairwise coherent left-cornered routes.
We easily see that left-cornered cliques have one left-cornered route starting at each left corner.
\mydef{Right-cornered cliques} are defined similarly. A \mydef{brick clique} is a (possibly empty) collection of pairwise coherent bricks. 
The \emph{brick complex} is the simplicial complex of brick cliques.
In the rest of this section, we prove that the brick complex is isomorphic to the canonical join complex of $\mathcal{L}_\G$. We first recall a few theoretical results on semidistributive lattices, for which we refer to~\cite{Barnard}.

\begin{figure}
	\centering
	\def\svgscale{.28}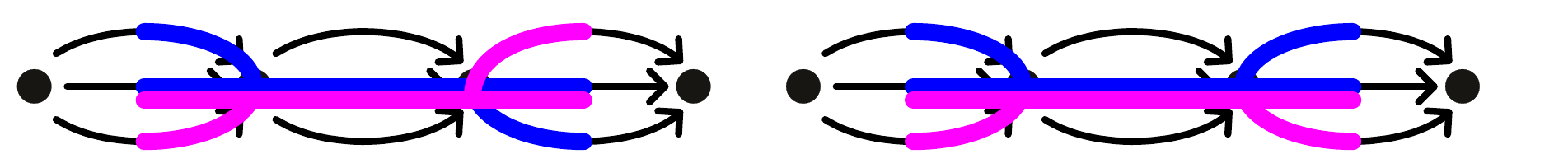
	\caption{The bricks on the left are incoherent, while those on the right are coherent.}
	\label{fig:coherence}
\end{figure}

An element of a finite lattice is called \mydef{join-irreducible} (resp. meet-irreducible) if it covers (resp. is covered by) a unique element. 
A lattice is distributive if the join and meet operations distribute over each other. Semidistributivity is a weakening of distributivity.

A key consequence of semidistributivity is that every element admits a ``minimal'' expression as the join of join-irreducible elements, called its \mydef{canonical join representation}.
There is moreover a canonical way to label covering relations of the lattice with join irreducible elements in such a way that the canonical join representation of an element is the set of labels of its lower covers.
The \mydef{canonical join complex} $\mathop{CJC}(L)$ of $L$ is the simplicial complex whose faces are canonical join representations.
Canonical meet representation and the canonical meet complex $\mathop{CMC}(L)$ are defined analogously, and for a semidistributive lattice 
$\mathop{CJC}(L)$ and $\mathop{CMC}(L)$ are isomorphic and flag~\cite{Barnard}. 

We first extend~\cref{prop:comparison_CvB} to label covering relations of the $\mathcal{L}_\G$ with bricks, and define two maps $\DownBricks$ and $\UpBricks$ from maximal cliques to brick cliques.

\begin{lemma}\label{lem:cov_bricks}
	In the situation of~\cref{prop:adjacent_cliques}, the last (resp. first) edges of paths $r_1$ and $r'_1$ (resp. $r_2$ and $r'_2$) are consecutive in the framing and define a left corner $\c_1$ (resp. right corner $\c_2$). We label the covering relation with the brick $\c_1 \s \c_2$.
\end{lemma}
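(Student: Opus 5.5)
The plan is to prove the statement for the left corner $\c_1$ and obtain the right corner $\c_2$ by left-right reflection $\G \mapsto \G^T$. This reflection swaps prefixes and suffixes and the incoming and outgoing orders, and it preserves both coherence and the conclusion of \cref{prop:adjacent_cliques}.

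Let $v$ be the first vertex of $\s$, and let $e_1, e'_1$ be the last edges of $r_1, r'_1$.

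\emph{Step 1: $r_1$ and $r'_1$ are nonempty and $e_1 \neq e'_1$.} Since $r$ and $r'$ are incoherent at a vertex of $\s$, one is clockwise to the other there. Say $r$ is clockwise to $r'$, so $r <^-_v r'$ and $r >^+_v r'$ (the strict incoming comparison transports along $\s$ to $v$). If $r_1$ were empty, $v$ would be a source and $r, r'$ would have equal prefixes, contradicting strictness. If $e_1 = e'_1$, the common subpath $\s$ could be extended backwards, contradicting maximality of the subroute of conflict. Since $r$ and $r'$ share $\s$, the strict inequality $r <^-_v r'$ is decided by the last edges, so $e_1 <_{\inn(v)} e'_1$. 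In particular $v$ is internal, because it has incoming edges and $\s$ continues out of it.

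\emph{Step 2: consecutiveness.} Suppose for contradiction that some incoming edge $e$ of $v$ satisfies $e_1 <_{\inn(v)} e <_{\inn(v)} e'_1$. I will use the standard fact that the routes of a maximal clique cover every edge of $G$ (from maximality and \cite{vBC}). So some route $t \in \Delta$ uses $e$. Then $t \neq r$, since $r$ enters $v$ via $e_1$, and hence $t \in \Delta \cap \Delta'$. Thus $t$ is coherent with both $r$ and $r'$.

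At $v$ the prefixes of $t$, $r$ and $r'$ differ in their last edges, so $r <^-_v t <^-_v r'$ strictly. Coherence of $r$ and $t$ at $v$ forbids $r$ being clockwise to $t$, which forces $r \leq^+_v t$. Coherence of $t$ and $r'$ forbids $t$ being clockwise to $r'$, which forces $t \leq^+_v r'$. Hence $r \leq^+_v r'$, contradicting $r >^+_v r'$. Therefore $e_1, e'_1$ are consecutive, and $\c_1 = (v, e_1, e'_1)$ is a left corner.

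\emph{Step 3: the right corner.} By the dual argument at the last vertex $v'$ of $\s$, the first edges of $r_2, r'_2$ form a right corner $\c_2$. So $\c_1 \s \c_2$ is a brick.

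\emph{Expected obstacle.} The main care is in the preorder bookkeeping. Strictness of the incoming comparison at $v$ must be genuinely decided by the last edge, and the weak outgoing comparisons must chain correctly, which holds because the outgoing preorders at $v$ are total preorders. The edge-covering fact is the other input I am assuming; if it is not available, I would instead build $t$ as a route through $e$ coherent with $\Delta \cap \Delta'$ and argue via maximality.
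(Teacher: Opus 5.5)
The paper states this lemma without proof, so there is no argument of theirs to compare against. Your proof is correct.

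Step 1 is right: nonemptiness of $r_1,r'_1$, distinct last edges by maximality of $\s$, and $e_1<_{\inn(v)}e'_1$ read off from the last edges. The chaining $r\leq^+_v t\leq^+_v r'$ in Step 2 is right, and the dual argument gives $\c_2$. One small imprecision: when $\s$ is a single vertex, $v$ is internal because it is the vertex where $r$ and $r'$ are incoherent (or because the dual of Step 1 makes $r_2$ nonempty). It is not because $\s$ continues out of $v$.

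The substantive input is the edge-covering fact, and you should state its source precisely. It does not follow from maximality alone. It follows from \cite{DKK}: the routes of a maximal clique form a full-dimensional simplex of the framing triangulation. Equivalently, they are $\lvert E(G)\rvert-\lvert V(G)\rvert+s(G)+t(G)$ linearly independent flows, so they span the whole flow space and cannot all vanish on any edge. The paper relies on the same fact implicitly: \cref{def:cornering-routes} presupposes that $\Delta$ contains routes through $\c^\up$ and $\c^\down$ for every left corner $\c$.

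This input cannot be replaced by your fallback as stated. Your own Step 2 computation shows that no route entering $v$ strictly between $e_1$ and $e'_1$ is coherent with both $r$ and $r'$. So a route $t$ through $e$ that is merely coherent with $\Delta\cap\Delta'$ gives a contradiction only if you also know one of the following:
\begin{enumerate}
\item $t$ is coherent with all of $\Delta$, or with all of $\Delta'$ (so $t$ lies in $\Delta$ or $\Delta'$);
\item $\Delta$ and $\Delta'$ are the only maximal cliques containing $\Delta\cap\Delta'$.
\end{enumerate}
Either of these is again an input of the same kind as the covering fact.
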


\begin{lemma}\label{lem:coherent_lower_bricks}
	Two bricks labeling lower (resp. upper) covering relations of a clique $\Delta$ are coherent.
\end{lemma}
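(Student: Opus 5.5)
We treat lower covers; upper covers follow by the up-down reflection $\G \mapsto \G^*$, which dualizes $\mathcal L_\G$ and exchanges lower and upper covers. Let $\Delta$ cover $\Delta'$ and $\Delta''$. Write $\Delta\setminus\Delta' = \{r\}$ and $\Delta'\setminus\Delta=\{r'\}$ with conflict $\s$. Similarly write $\Delta\setminus\Delta''=\{q\}$ and $\Delta''\setminus\Delta=\{q'\}$ with conflict $\t$. By \cref{lem:cov_bricks}, the labels are $\sbar=\c_1\s\c_2$ and $\tbar=\d_1\t\d_2$. Since $\Delta$ is the upper element of each cover, the edge is oriented from $\Delta'$ to $\Delta$, so $r'$ is clockwise to $r$ at $\s$. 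Likewise $q'$ is clockwise to $q$ at $\t$.

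The key observation is that $r,q\in\Delta$, so $r$ and $q$ are coherent. By \cref{prop:adjacent_cliques}, the four routes $r_1\s r_2'$, $r_1'\s r_2$, $q_1\t q_2'$ and $q_1'\t q_2$ also lie in $\Delta$ (indeed in $\Delta\cap\Delta'$, resp. $\Delta\cap\Delta''$). Hence every pair among these six routes is coherent. We argue by contradiction and treat two cases.

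\textbf{Crossing.} Suppose $\sbar$ is clockwise to $\tbar$ at some vertex $v$ common to $\s$ and $\t$. Comparing $\sbar$ and $\tbar$ at $v$ means comparing their prefixes back to the first difference, then their suffixes. If the first difference of the prefixes occurs strictly inside $\s\cap\t$ or at an edge, the same comparison holds for the honest routes. If the prefix comparison is decided at the left corner $\c_1$, the extended order $\c_1^\down<\c_1<\c_1^\up$ lets us choose which of $r_1$ or $r_1'$ to prepend so that the strict inequality persists. Do the same on the right with $\c_2$ or $\d_2$. In every configuration we expect to find a route in $\{r,\ r_1\s r_2',\ r_1'\s r_2\}$ and a route in $\{q,\ q_1\t q_2',\ q_1'\t q_2\}$ that are incoherent at $v$. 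All six routes are in $\Delta$, so this contradicts coherence.

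\textbf{Shared corner.} Suppose $\sbar$ and $\tbar$ are weakly coherent but share a left corner, say $\c_1=\d_1$. Then both $r$ and $q$ end their prefix with the same edge $\c_1^\up$ (or $\c_1^\down$), determined by the orientation. Likewise both $r_1\s r_2'$ and $q_1\t q_2'$ use the other edge of the corner. By weak coherence, $\s$ and $\t$ agree just after the corner and then separate. We would then look at the route $r_1\s r_2'$ against $q$, or $r$ against $q_1\t q_2'$. These enter the corner vertex on opposite sides and leave on the side forced by where $\s$ and $\t$ diverge. One of these pairs should be incoherent, giving a contradiction. The shared right corner case is symmetric under the left-right reflection $\G^T$.

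The main obstacle is the careful bookkeeping in the crossing case. Choosing correctly among the three routes associated with each brick requires distinguishing whether the crossing vertex lies at an endpoint of $\s$ or $\t$, where the corner decides the comparison, or strictly in the interior. I would first handle the case where $\s\cap\t$ lies strictly inside both subroutes, which is immediate from $r,q\in\Delta$. I would then reduce the endpoint cases to the sign rule imposed by the clockwise orientation of $r'$ relative to $r$.
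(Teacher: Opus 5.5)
Your proposal has a genuine gap. It only uses that the six routes $r$, $r_1\s r_2'$, $r_1'\s r_2$, $q$, $q_1\t q_2'$, $q_1'\t q_2$ lie in $\Delta$, together with the orientations of $r'$ and $q'$. It never uses that the two lower covers are distinct, and distinctness must enter somewhere. All your hypotheses still hold if $\Delta''=\Delta'$, where $\tbar=\sbar$ and the conclusion fails, since a brick shares its corners with itself.

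This is exactly where your sketch breaks.
\begin{itemize}
\item \textbf{Crossing case.} Suppose $\t$ contains $\c_1^\up\s\c_2^\down$. Then $\sbar$ is clockwise to $\tbar$ at every vertex of $\s$. Keeping both inequalities strict forces you to prepend $r_1'$ (edge $\c_1^\down$) and append $r_2'$ (edge $\c_2^\up$), that is, to use $r'=r_1'\s r_2'$, which is not in $\Delta$.
\item \textbf{Shared-corner case.} There is also a slip here. The route $r_1\s r_2'$ has prefix $r_1$, so it enters $\c_1^\bullet$ through $\c_1^\up$ exactly like $r$; only $r_1'\s r_2$ and $r'$ use $\c_1^\down$. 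Likewise $q_1\t q_2'$ enters like $q$. So both pairs you propose enter on the same side. When, for instance, $\t$ continues past $\c_2^\bullet$ along $\c_2^\down$, nothing forces either pair to cross.
\end{itemize}

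The missing idea is to use the routes $r'\in\Delta'$ and $q'\in\Delta''$. Each route of a maximal clique can be exchanged in at most one way, since the clique complex triangulates the flow polytope. Hence distinct lower covers give $q\neq r$, so $q\in\Delta\setminus\{r\}\subseteq\Delta'$ and $r\in\Delta''$. Therefore $r'$ is coherent with $q$, and $q'$ is coherent with $r$.

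From $\c_1^\down<\c_1<\c_1^\up$ and $\c_2^\down<\c_2<\c_2^\up$, at every vertex of $\s$, $r'$ is clockwise to $\sbar$ and $\sbar$ is clockwise to $r$. Likewise $q'$ is clockwise to $\tbar$ and $\tbar$ to $q$ along $\t$. The cases then close as follows.
\begin{itemize}
\item If $\sbar$ were clockwise to $\tbar$ at $v$, transitivity of the preorders at $v$ would make $r'$ clockwise to $q$. This contradicts $r',q\in\Delta'$; the reverse crossing is handled symmetrically with $q',r\in\Delta''$.
\item If $\sbar$ and $\tbar$ shared a left corner $\c$, then at $\c^\bullet$ one has $r'<^-q$ and $q'<^-r$. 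Coherence forces $r'\le^+q$ and $q'\le^+r$, whence $r'\le^+q<^+q'\le^+r<^+r'$, a contradiction.
\item Shared right corners are symmetric.
\end{itemize}
This handles every case at once, including $\sbar=\tbar$, with no case analysis on where $\s$ and $\t$ diverge.
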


\begin{theorem}\label{thm:bijection_bricks}
	The map $\DownBricks$ that sends a maximal clique $\Delta$ to the set of bricks labeling its lower covering relations is a bijection from maximal cliques of routes to brick cliques.

	The map $\UpBricks$ defined similarly with upper covering relations of $\Delta$ is also a bijection.
\end{theorem}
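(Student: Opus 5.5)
By \cref{lem:coherent_lower_bricks}, $\DownBricks(\Delta)$ is a brick clique, so $\DownBricks$ is well defined. I would prove bijectivity by building an explicit inverse in two steps, following \cref{fig:results_scheme}. The first step is a reconstruction map $\Phi_L$ from brick cliques to left-cornered cliques. The second is a map $\Psi_L$ from left-cornered cliques to maximal cliques. Together with the left-cornering map $\Sigma_L$ from maximal cliques to left-cornered cliques, I would then show that $\Psi_L\circ\Phi_L$ is a two-sided inverse of $\DownBricks$.

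\textbf{Step 1: the left-cornering map.} For a maximal clique $\Delta$ and a left corner $\c$ at $v$, I would pick the routes of $\Delta$ through $\c^\down$ and through $\c^\up$. Among them I would take the one whose suffix after $v$ is extremal in the outgoing preorder $\leq^+_v$, choosing it compatibly with the orientation so that the lower covers of $\Delta$ are detected. Let $\Sigma_L(\Delta)_\c$ be $\c$ followed by that suffix. Two facts need checking.

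\begin{enumerate}[label=(\roman*)]
\item The result is pairwise coherent and has one route per left corner, so it is a left-cornered clique.
\item $\Sigma_L$ is injective. The inverse $\Psi_L$ is obtained by gluing: starting from the sources, each route of $\Delta$ is recovered by following edges and, at each vertex, switching to the suffix prescribed by the cornered route at the appropriate corner. Coherence forces the choices.
\end{enumerate}

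This mirrors how a permutation is recovered from its inversion data, and it is essentially \cref{prop:left-cornering-inverse}.

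\textbf{Step 2: reconstruction from bricks.} For a brick clique $B$, I would define $\Phi_L(B)$ corner by corner. If $\c$ is the left corner of a brick $\c\s\c_2\in B$, start the left-cornered route with $\c\s$ and continue through the $\c_2^\up$ side of $\c_2$. Otherwise, start at $\c$ and proceed greedily, always taking the minimal outgoing edge, meaning the clockwise-most continuation. Whenever the path meets a brick of $B$, that is, it enters the left corner side of a brick's path and runs along it, it follows the brick and exits via the corner. The order in which the rules are applied must be fixed so the result is deterministic. Coherence of $B$ is exactly what makes these rules consistent: two bricks never force contradictory turns.

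\textbf{Step 3: the two composites.} I would prove $\Phi_L(\DownBricks(\Delta))=\Sigma_L(\Delta)$. A lower cover of $\Delta$ corresponds to a pair of routes $r,r'$ crossing along $\s$, labelled $\c_1\s\c_2$ by \cref{lem:cov_bricks}. Using \cref{prop:adjacent_cliques}, where $\Delta$ contains the swapped routes $r_1\s r'_2$ and $r'_1\s r_2$, I would check that the cornered route of $\Delta$ at $\c_1$ is exactly the one prescribed by the brick. At corners with no lower cover, I would argue that the clockwise-greedy choice is forced: if $\Delta$ deviated from it, the first deviation would produce a clockwise pair, hence a down-rotation labelled by a brick at that corner. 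Conversely, I would show that $\DownBricks(\Psi_L\Phi_L(B))=B$, so that every brick clique is attained. This means that the lower covers of the reconstructed clique are labelled exactly by $B$. Each brick of $B$ yields a rotatable pair, and any extra lower cover would contradict the greedy rule.

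\textbf{The main obstacle.} I expect this last surjectivity direction to be the hardest part. It requires knowing that the reconstructed family of routes is a genuine maximal clique, with pairwise coherence and the right cardinality $|E|-|V|+s+t$, and that no unintended lower covers appear. My first attempt would be induction on $|B|$: remove a brick that is minimal in a suitable order, for instance one with no other brick nested clockwise inside it. The reconstruction for the smaller clique then differs by a single counterclockwise rotation, which adds exactly that brick as a label.

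Finally, the statement for $\UpBricks$ follows by applying the result to the up-down reflection $\G^*$. This reflection dualizes $\mathcal L_\G$, exchanging lower and upper covers, and it maps bricks to bricks while preserving coherence.
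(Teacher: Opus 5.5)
Your overall architecture (well-definedness from \cref{lem:coherent_lower_bricks}, factoring through left-cornered cliques, up--down reflection for $\UpBricks$) is the paper's, but two steps fail as proposed.

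\textbf{First, your $\Phi_L$ is oriented for the wrong map.} With the paper's convention ($p$ is clockwise to $q$ when $p<^-q$ and $p>^+q$), the clockwise-most continuation is the \textit{largest} outgoing edge. Always taking the minimal edge and leaving a brick through $\c_2^\up$ is, up to making your rules precise, the up--down reflected reconstruction, and that one inverts $\UpBricks$.

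Concretely, take the oruga graph of $\mathfrak{S}_2$, with edges $t_1<b_1$ into vertex $1$ and $t_2<b_2$ out of it, and corners $\c=(1,t_1,b_1)$, $\c_2=(1,t_2,b_2)$.
\begin{itemize}
\item The bottom clique $\{t_1t_2,t_1b_2,b_1b_2\}$ has no lower cover. Your $\Phi_L(\emptyset)=\{\c t_2\}$ is crossed by $t_1b_2$.
\item The top clique $\{t_1t_2,b_1t_2,b_1b_2\}$ has $\DownBricks=\{\c\c_2\}$. You output $\c b_2$, which is clockwise to $b_1t_2$.
\end{itemize}
So $\Phi_L(\DownBricks(\Delta))=\Sigma_L(\Delta)$ already fails on the smallest example. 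Your outputs are exactly swapped; read as the reflected construction, your rules could instead prove the $\UpBricks$ statement first, followed by reflection.

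The correct rule is the single constraint of \cref{def:first_reconstruction}: take the largest outgoing edge that does not make the path clockwise to any brick of $T$. This handles all bricks at once, so no priority order among rules is needed. It makes the path hug a brick from above and exit on the $\c_2^\down$ side.

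Likewise, $\Sigma_L(\Delta)$ at $\c$ is not an extremal suffix. It is the common suffix of the largest route of $\Delta$ through $\c^\down$ and the smallest route through $\c^\up$ (\cref{lem:cornering-routes}).

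\textbf{Second, and more seriously, your induction for surjectivity rests on a false claim.} Deleting a brick from a brick clique does not in general move the reconstructed clique by a single rotation, because canonical joinands are not added one cover at a time. On the oruga graph of $\mathfrak{S}_3$, the top element $321$ has brick clique $\{\sbar_1,\sbar_2\}$: the trivial bricks at vertices $1$ and $2$, i.e.\ Reading's arcs $(1,2)$ and $(2,3)$. Deleting either brick leaves the brick clique of an atom, which lies two covers below.

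What is missing is the extremal characterization the paper relies on:
\begin{itemize}
\item $\Phi_L(T,\c)$ is the $\leq^+_\c$-largest left-cornered route weakly coherent with $T$.
\item $\Psi_L(LT,\d)$ is the $\leq^-$-smallest route containing $p_0$ and coherent with $LT$.
\end{itemize}
Hence the route attached to $\d$ is the most clockwise route through $\d^\up$ (through $\d$, for a sink edge) that is not clockwise to any brick of $T$ but is counterclockwise to $\tbar_\d$, if that brick exists. From this:
\begin{itemize}
\item Pairwise coherence comes out of the construction.
\item Maximality is automatic, since the routes are indexed by left corners and sink edges, whose number is exactly $|E(G)|-|V(G)|+s(G)+t(G)$.
\item The routes that can be rotated clockwise are exactly those built from left corners of bricks of $T$, and their labels are $T$.
\item Every route of $\Delta$ satisfies this characterization with respect to $T=\DownBricks(\Delta)$.
\end{itemize}
This gives both composites directly, with no induction.
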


\begin{corollary}\label{cor:canonical_join_complex}
	Join-irreducible (resp. meet-irreducible) elements of $\mathcal{L}_\G$ are in bijection with bricks. The brick complex is isomorphic to $\mathop{CJC}(\mathcal{L}_\G)$ (resp. $\mathop{CMC}(\mathcal{L}_\G)$) via this bijection. 
\end{corollary}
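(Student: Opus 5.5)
The corollary will follow from \cref{thm:bijection_bricks} combined with the standard theory of semidistributive lattices recalled above. There are three steps: identify the labelling of \cref{lem:cov_bricks} with the canonical join-irreducible labelling, deduce the bijection between join-irreducibles and bricks, and then identify the faces of the two complexes.

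\textbf{Step 1: the brick labelling is the canonical labelling.} For a finite semidistributive lattice, a covering relation $\Delta' \lessdot \Delta$ is canonically labelled by the unique join-irreducible $j$ that is minimal with $j \leq \Delta$ and $j \not\leq \Delta'$. The canonical join representation of $\Delta$ is the set of labels of its lower covers. I will show that two covers carry the same brick label if and only if they carry the same canonical label. My plan is to use the polygonality of $\mathcal{L}_\G$ from \cite{vBC}. In a polygonal semidistributive (indeed congruence uniform) lattice, the canonical labels are constant along opposite sides of each polygon. Moreover, the equivalence relation generated by forcing on covers has exactly one class per join-irreducible. So it suffices to check two facts:
\begin{itemize}
\item[(a)] the brick labels of \cref{lem:cov_bricks} respect the same forcing relations on each polygonal interval, which is a local computation using \cref{prop:adjacent_cliques} on the routes involved in a rank-two interval;
\item[(b)] covers with distinct canonical labels have distinct bricks.
\end{itemize}

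\textbf{Step 2: bijection between join-irreducibles and bricks.} Here the argument is simpler and I would rely mostly on this route. A join-irreducible $j$ has exactly one lower cover, so $\DownBricks(j)$ is a singleton $\{b_j\}$. Conversely, every single brick $b$ is a brick clique, because the coherence condition is vacuous for one brick. By \cref{thm:bijection_bricks}, there is then a unique maximal clique $\Delta$ with $\DownBricks(\Delta)=\{b\}$, and this $\Delta$ is join-irreducible. Hence $j \mapsto b_j$ is a bijection from join-irreducibles onto bricks. This gives the first claim of the corollary without needing Step 1 in full.

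\textbf{Step 3: identifying the complexes.} I need the brick labelling of a cover $\Delta' \lessdot \Delta$ to equal $b_j$, where $j$ is the canonical label of that cover. Once this holds, $\DownBricks(\Delta)$ is exactly the image of the canonical join representation of $\Delta$. Then \cref{thm:bijection_bricks} shows that the faces of $\mathop{CJC}(\mathcal{L}_\G)$ correspond exactly to the brick cliques. This is the key compatibility statement and the main obstacle. I would prove it by induction down the lattice: the canonical label $j$ satisfies $j \leq \Delta$, and the cover $j_* \lessdot j$ is joined to $\Delta' \lessdot \Delta$ by a chain of polygon-forcings (perspectivity). So it suffices to show the brick label is invariant under one such forcing, which is item (a) of Step 1.

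\textbf{Checking the polygon invariance.} Take a polygon with bottom $\Delta_0$ and top $\Delta_1$, obtained from two rotations at subroutes of conflict $\s$ and $\t$. The opposite sides are the same two flips performed in the other order. By \cref{prop:adjacent_cliques}, performing one flip changes the conflicting routes of the other flip only outside their common subroute of conflict. It also keeps the consecutive edges that form its corners. Therefore the brick $\c_1\s\c_2$ is unchanged on opposite sides. For polygons with more than four sides, the same reasoning applies to the top and bottom pairs of sides. I expect this case analysis to be the most delicate part.

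\textbf{The meet-irreducible case.} The statement for meet-irreducibles and $\mathop{CMC}$ follows by the same argument with $\UpBricks$. Alternatively, it follows by applying the up-down reflection $\G^*$, which dualizes the lattice and sends bricks to bricks.
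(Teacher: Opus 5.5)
Your Step 2 is correct, and it is how the first sentence of the corollary follows from \cref{thm:bijection_bricks}: join-irreducibles are exactly the cliques with one lower cover, i.e.\ the preimages of singleton brick cliques. You are also right that the second sentence needs more. Every cover $\Delta'\lessdot\Delta$ must be labelled by the brick $b_j$ of its canonical label $j$, and the abstract presents this as immediate.

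The gap is that your proof of this compatibility stops exactly where the content is. For a square your invariance argument works, but trivially. The top of a square is $(\Delta_0\setminus\{r,q\})\cup\{r',q'\}$, so opposite sides exchange literally the same pair of routes. This does not transfer to polygons with more than four sides. There the top edge opposite a bottom flip is not ``that flip performed after the other one'', and it exchanges different routes. Already in the hexagon $\mathfrak S_3$ of \cref{ex:weak_order}, the bottom edge $123\lessdot 213$ exchanges $100$ and $010$. The opposite top edge $312\lessdot 321$ exchanges $101$ and $011$. What has to be proved is that these two pairs have the same subroute of conflict and the same two corners. For an arbitrary polygon of $\mathcal L_\G$ this needs a genuine description of how the exchanged routes evolve along the long sides. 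Your claim that one flip changes the other's routes ``only outside their common subroute of conflict'' is asserted, not derived from \cref{prop:adjacent_cliques}. On a side of length greater than two there is no single ``other flip'' to perform. So the identification of the brick labelling with the canonical labelling, and hence the isomorphism of complexes, is not established.

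The lattice-theoretic reduction also needs repair. As written, the claim that the equivalence relation generated by forcing has one class per join-irreducible is false. In $\mathfrak S_3$, contracting $123\lessdot 213$ forces the side edge $213\lessdot 231$, whose canonical label is $231\neq 213$. What you actually need is this: whenever $[j_*,j]$ transposes up to $[\Delta',\Delta]$, the two covers are linked by a chain of moves, each between a bottom edge and the opposite top edge of some polygon. This follows from two ingredients, which you must cite or prove:
\begin{itemize}
\item Reading's result that edge-forcing in a finite polygonal lattice is generated by forcing inside polygons;
\item congruence uniformity, which is needed because a side edge of a polygon never carries the label of one of its top or bottom edges.
\end{itemize}
Alternatively, you can bypass polygons altogether. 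The canonical label of $\Delta'\lessdot\Delta$ is the minimum of $\{z\le\Delta : z\not\le\Delta'\}$. So, with $b$ the brick label of the cover and $j_b$ the join-irreducible with $\DownBricks(j_b)=\{b\}$, it suffices to show $j_b\le\Delta$, $j_b\not\le\Delta'$ and $(j_b)_*\le\Delta'$. These are comparisons you can attack with \cref{prop:comparison_CvB}.
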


In order to prove~\cref{thm:bijection_bricks}, we define the inverse of the map $\DownBricks$ through an explicit reconstruction algorithm in two steps.
We define a first map $\Phi_L$ from brick cliques to left-cornered cliques, then a second one $\Psi_L$ to maximal cliques. The left-right reflection of $\G$ defines a `right' version $\Phi_R$ and $\Psi_R$ of the reconstruction maps, and the up-down reflection give a `dual' version of the reconstruction, inverse to $\DownBricks$.

\begin{definition}\label{def:first_reconstruction}
	Let $T$ be a brick clique and $\c$ a left corner of $\G$.
	We define the \mydef{reconstruction} $\Phi_L(T, \c)$ by growing a left-cornered path starting with $\sbar_0 = \c$.
	Iteratively define $\sbar_{i+1}=\sbar_i e_{i+1}$, where $e_{i+1}$ is the largest outgoing edge of $v_i = R(\sbar_i)$ such that for all bricks $\tbar$ of $T$ with $v_i \in \tbar$, if $\sbar_{i} \leq^-_{v_i} \tbar$ then also $e_{i+1} \leq^+_{v_i} \tbar$.
	This process concludes with a left-cornered route $\sbar_k$, which we call $\Phi_L(T,\c)$.
\end{definition}

By construction, $\Phi_L(T, \c)$ is the largest left-cornered route in $\leq^+_\c$ that is weakly coherent with $T$, and with $\Phi_L(T, \c) >^-_{\c^*} \tbar_\c$ for the brick $\tbar_\c \in T$ with left corner~$\c$ if any.
The \mydef{first reconstruction map} $\Phi_L$ sends a brick clique $T$ to the collection $\{\Phi_L(T, \c)\}$, where $\c$ runs over all left corners of $\G$. 

\begin{example}\label{ex:first_reconstruction}
	An example is displayed on the bottom left of~\cref{fig:reconstruction}.
	From each corner, displayed on the left, its image under $\Phi_L$ is shown. 
	For corner $ab = (2, a, b)$, the reconstruction algorithm chooses edges $e, g, k$. Note for instance that edge $h$ was not picked as $h >^+_3 gh$. 
	For corner $bc$, note also that the path $fh$ also gives a left-cornered path coherent with the brick clique, but $fik$ produces a left-cornered route larger for $\leq^+_2$.
\end{example}

The second step of the reconstruction is very similar to the first, except that we begin with a sink edge or left-cornered route and iteratively pick the smallest possible incoming edge to extend our reconstructed path to the left.

\begin{definition}\label{def:second_reconstruction}
	Let $LT$ be a left-cornered clique and $\d$ a left corner or a sink edge of~$\G$. Let $\tbar_\d$ be the left-cornered route of $LT$ containing $\d$ if $\d$ is a corner.
	Let $p_0 = \d^\up \t_\d$ if~$\d$ is a corner and $p_0 = \d$ otherwise. Iteratively define $p_{i+1} = e_{i+1} p_i$, where $e_{i+1}$ is the smallest incoming edge of $v_i = L(p_i)$ such that for all left-cornered bricks $\tbar$ of $LT$ passing through $v_i$, if $p_i \geq^+_{v_i} \tbar$ then $e_{i+1} \geq^-_{v_i} \tbar$.
	This process concludes with a route $p_k$, which we denote $\Psi_L(LT, \d)$. 
\end{definition}

By construction, $\Psi_L(LT, \d)$ is the smallest route in $\leq^-$ which contains $p_0$ and is coherent with all $\tbar \in LT$. The \mydef{second reconstruction map} $\Psi_L$ sends a left-cornered clique $LT$ to the set $\{\Psi_L(LT, \d)\}$, where $\d$ runs over all left-corners and sink edges of $\G$.

\begin{example}\label{ex:second_reconstruction}
	An example of the second step of reconstruction algorithm $\Psi_L$ is displayed on the right of~\cref{fig:reconstruction}.
	Each left-cornered route provides a route, as well as each sink edge, above. The part coming from the left-cornered route or the edge is drawn in pink, whereas the reconstructed part is in orange.
	For corner $gi$, the reconstructed route uses for instance edge $b$ since edge $c$ is larger in the framing, and edge $a$ would produce a route incoherent with the left-cornered route associated to corner $ab$. 
\end{example}

\begin{figure}[ht]
	\centering
	\def\svgscale{.26}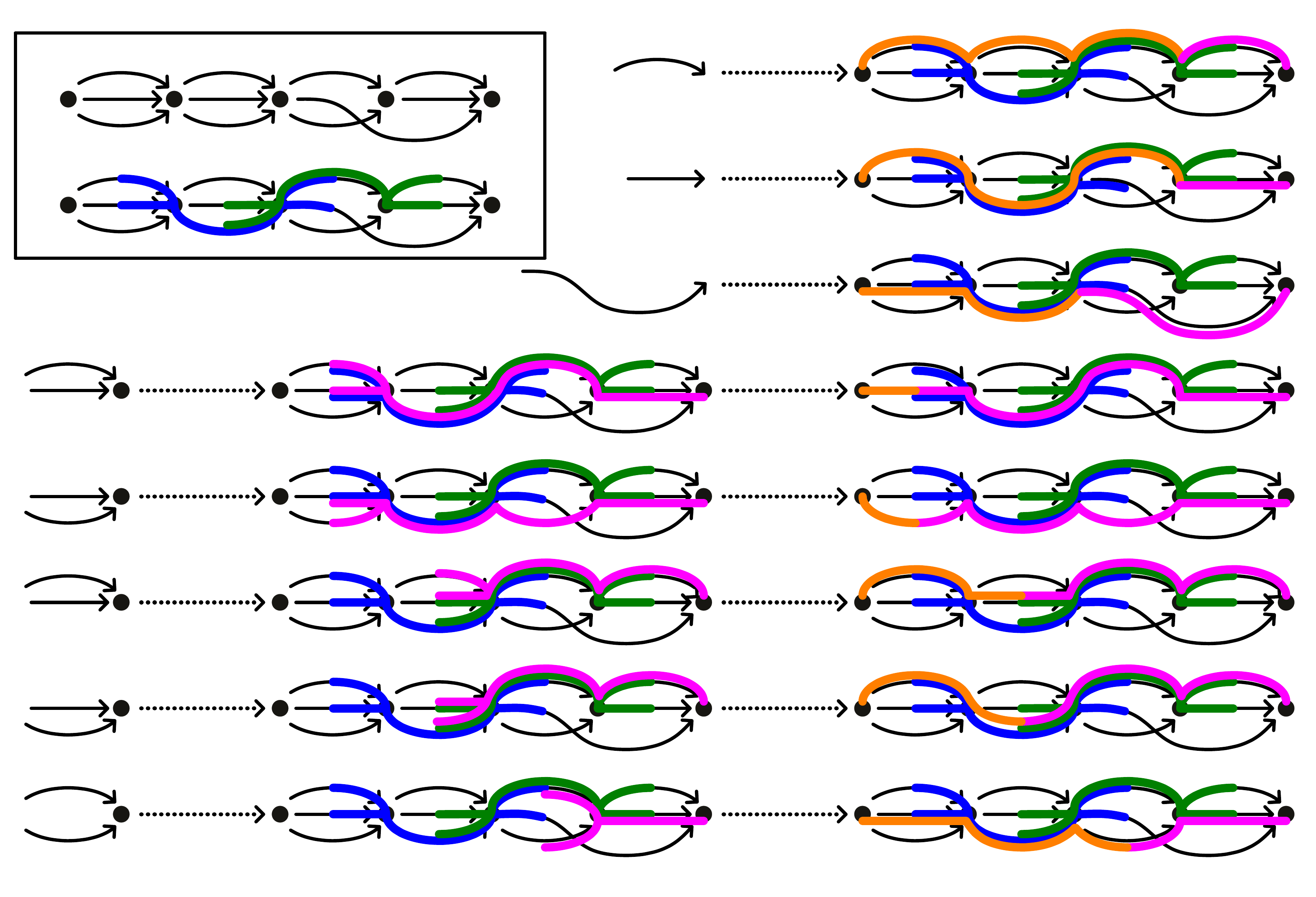
	\caption{An example of the reconstruction algorithm. Top left is displayed the framed graph which has five left corners, with a clique of two coherent bricks (green and blue). Below is the first step of reconstruction $\Phi_L$. The corner from which each left-cornered route (in pink) is grown is desplayed on the left. On the right is the second step of reconstruction $\Psi_L$, with five routes reconstructed from a left-cornered route, and three routes reconstructed from a sink edge. The reconstructed part is in orange.}
	\label{fig:reconstruction}
\end{figure}

We now sketch the proof of~\cref{thm:bijection_bricks}, focusing on the map $\DownBricks$. 

\begin{proof}[Proof of~\cref{thm:bijection_bricks}]
	Firstly, if $\Delta$ is a maximal clique then $\DownBricks(\Delta)$ is a brick clique by~\cref{lem:coherent_lower_bricks}. If $T$ is a brick clique, then $\Psi_L \circ \Phi_L(T)$ is a collection of pairwise compatible routes. Moreover, since $\lvert E(G) \rvert - \lvert V(G) \rvert + s(G) + t(G)$ is equal to the number of left corners and sink edges, $\Psi_L \circ \Phi_L(T)$ is a maximal clique.

	By construction, the routes of $\Psi_L \circ \Phi_L(T)$ which can be rotated clockwise are exactly those reconstructed from left corners of bricks of $T$. Moreover, these bricks of~$T$ are exactly those labeling these lower covers, so that \mbox{$\DownBricks(\Psi_L \circ \Phi_L(T)) = T$}.

	Given a brick clique $T$, the route reconstructed from a left corner (resp. sink edge)~$\d$ is by construction the most clockwise route containing $\d^\up$ (resp. $\d$), not clockwise to any brick of $T$ but counterclockwise to the brick $\tbar_\d \in T$ with \mbox{$L(\tbar_\d) = \d$} if any. 
	For each route $r \in \Delta$, there is a left corner or sink edge $\d$ such that~$r$ satisfies the above property with respect to $\d$ and $\DownBricks(\Delta)$, hence we have $\Psi_L \circ \Phi_L(\DownBricks(\Delta)) = \Delta$. 
\end{proof}

We end this section by explaining how we recover noncrossing arc diagrams of Reading~\cite{Reading} on our running example of the weak order on $\mathfrak{S}_n$ (see~\cref{ex:weak_order}).

\begin{example}\label{ex:noncrossing_arcs}
	Displaying numbers from $1$ to $n$ in order on a line, an arc is a curve $\alpha$ that starts at an integer $i$ and ends at an integer $j > i$, going either above or below each integer $k \in \left(i, j\right)$.
	We map such an arc to the brick $\sbar_\alpha$ such that $L(\sbar_\alpha) = i$, $R(\sbar_\alpha) = j-1$ and $\s_\alpha$ uses the top edge between $k-1$ and $k$ if and only if $\alpha$ goes above $k$.
	A noncrossing arc diagram is a collection of arcs which do not cross and do not share a starting point or an ending point. These conditions immediately translate through the above map to being coherent bricks. An example for $n=3$ is displayed on the right of~\cref{fig:intro}.
\end{example}

\section{Cornered routes and cubical coordinates}

We now further explore left-cornered cliques, which are the intermediate object appearing in the reconstruction algorithm.
We define explicitely the inverse map of the second reconstruction step.
Taking coranks in the outgoing order, we define left cubical coordinates on the framing lattices, named as such because they embed the Hasse diagram with edges in the direction of the cube.
In the process, we introduce left-incoherent bricks, which play a role analogue to left inversions for the weak order on the symmetric group.
We prove that comparison in the framing lattice corresponds to componentwise comparison of cubical coordinates, generalizing bracket vectors for the Tamari lattice and providing an efficient comparison criterion. 

We start by defining upper and lower cornering routes of a corner. Since all routes within a clique are coherent, its subset of routes going through a vertex is totally ordered. 
\begin{definition}\label{def:cornering-routes}
	Let $\c = (\c^\bullet,\c^\up,\c^\down)$ be a left corner of $\G$ and $\Delta$ a maximal clique of routes. The \mydef{upper cornering route} $r^\up_\c(\Delta)$ of $\Delta$ at corner $\c$ is the smallest route in $\Delta$ using the edge $\c^\up$. The \mydef{lower cornering route} $r^\down_\c(\Delta)$ is defined symmetrically. 
\end{definition}

\begin{lemma}\label{lem:cornering-routes}
	Let $\c$ be a left corner of $\G$ and $\Delta$ a maximal clique of routes. The routes $r^\down_\c(\Delta)$ and $r^\up_\c(\Delta)$ coincide after $\c$ and are consecutive within $\Delta$ in the incoming order.
	In particular, a route $p$ of $\Delta$ can not be the lower (resp. upper) cornering route of $\Delta$ at two different left corners. 
\end{lemma}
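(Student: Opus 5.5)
The plan is to fix the left corner $\c$, write $v = \c^\bullet$, and work with the routes of $\Delta$ passing through $v$. Since routes of a clique are pairwise coherent, these routes are totally ordered, and the incoming and outgoing preorders at $v$ are compatible on them: if $a <^-_v b$ then $a \leq^+_v b$.

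Put $p = r^\down_\c(\Delta)$ and $q = r^\up_\c(\Delta)$. I read ``symmetrically'' in \cref{def:cornering-routes} as making $p$ the largest route of $\Delta$ using $\c^\down$; the argument depends on this reading. First I check that $p$ and $q$ exist. If no route of $\Delta$ used $\c^\up$, say, I would build a route through $\c^\up$ from an existing route through $v$ (taking a suffix after $v$ and a suitable prefix ending in $\c^\up$). I would then argue it is coherent with $\Delta$, which by maximality puts it in $\Delta$. I expect this to be routine, but I would first look for it in \cite{vBC}.

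\textbf{Step 1: $p$ and $q$ coincide after $v$.} Because $\c^\down <_{\inn(v)} \c^\up$, we have $p <^-_v q$, and coherence gives $p \leq^+_v q$. Suppose the suffixes differ, so $p <^+_v q$. Consider the spliced route $r$ that follows $q$ up to $v$ and $p$ after $v$. It uses $\c^\up$, and it is strictly smaller than $q$ in the total order on routes of $\Delta$ through $v$ (same prefix, smaller suffix). The key claim is that $r$ is coherent with every $s \in \Delta$. This is a splicing argument in the spirit of \cref{prop:adjacent_cliques}:
\begin{itemize}
\item Suppose $r$ and $s$ have a conflict whose subroute of conflict lies entirely before $v$ (resp. after $v$). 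Then $s$ is incoherent with $q$ (resp. $p$), a contradiction.
\item Suppose the subroute of conflict contains $v$. Then $s$ goes through $v$, and comparing $s$ with both $p$ and $q$ in the total order shows that $s$ is either weakly below both or weakly above both. In either case $s$ lies on one side of $r$ as well.
\end{itemize}
By maximality $r \in \Delta$, contradicting the minimality of $q$. I expect this coherence check to be the main obstacle, especially the case analysis when $s$ shares a segment with $r$ straddling $v$.

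\textbf{Step 2: $p$ and $q$ are consecutive in $\leq^-$.} By Step 1, $p$ and $q$ end at the same sink edge, so they are comparable in $\leq^-$. Let $s \in \Delta$ satisfy $p <^- s <^- q$. If $s$ left the common suffix of $p$ and $q$ at some vertex $w$ after $v$ (reading backwards), then $s$ would compare identically to $p$ and to $q$ at $w$, so it could not lie between them. Hence $s$ contains the common suffix and enters $v$ through an edge that is weakly between $\c^\down$ and $\c^\up$. Since these two edges are consecutive, $s$ uses one of them. If it uses $\c^\down$, maximality of $p$ gives $s \leq^- p$; if it uses $\c^\up$, minimality of $q$ gives $s \geq^- q$. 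Either way we contradict $p <^- s <^- q$.

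\textbf{The ``in particular''.} Suppose $p$ is the lower cornering route at two distinct left corners $\c$ and $\c'$. A route enters each vertex by a single edge, so $\c^\bullet \neq \c'^\bullet$; say $\c^\bullet$ comes first along $p$. By Step 2, both $r^\up_\c(\Delta)$ and $r^\up_{\c'}(\Delta)$ are the immediate successor of $p$ in $\leq^-$, so they are the same route $q$. By Step 1 applied at $\c$, $q$ agrees with $p$ after $\c^\bullet$, and in particular enters $\c'^\bullet$ through $p$'s edge $\c'^\down$. But as the upper cornering route at $\c'$, $q$ must use $\c'^\up$, a contradiction. The upper case is symmetric.
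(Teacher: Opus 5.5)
The extended abstract states this lemma without proof, so there is no argument of the paper's to compare yours with. Judged on its own, your proof is correct.

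\begin{itemize}
\item Your reading of ``symmetrically'' ($p$ is the largest route of $\Delta$ through $\c^\down$) is the right one.
\item The first two cases of Step 1 reduce correctly to conflicts with $q$ and with $p$.
\item Step 2 is complete.
\item The ``in particular'' follows as you say. Routes of $\Delta$ ending at a given sink edge are totally ordered by $\leq^-$, so immediate successors (resp. predecessors) are unique.
\end{itemize}

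The step you should not leave as stated is the third case of Step 1. You claim that a route $s\in\Delta$ through $v$ is, in the total order at $v$, either weakly below both $p$ and $q$ or weakly above both. This is where the two hypotheses of the lemma (consecutiveness of $\c^\down,\c^\up$ and the extremal choice of $p,q$) actually enter, and without them the claim is false. Suppose $s$ entered $v$ through an edge $e_s$ with $\c^\down <_{\inn(v)} e_s <_{\inn(v)} \c^\up$ and had $p <^+_v s \leq^+_v q$. Then $s$ would be coherent with both $p$ and $q$, but clockwise to your spliced route $r$.

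The fix is your Step 2 argument, transplanted to the order at $v$. If $p < s < q$ there, then $\c^\down \leq_{\inn(v)} e_s \leq_{\inn(v)} \c^\up$. By consecutiveness, $e_s \in \{\c^\down,\c^\up\}$. This contradicts the maximality of $p$ or the minimality of $q$.

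The existence of $p$ and $q$ amounts to the statement that every edge lies on some route of every maximal clique. Your sketch (``a suitable prefix'') hides the real difficulty. When no route of $\Delta$ passes through the tail of $\c^\up$, there is no ready-made prefix to splice in, and you would have to grow one greedily, as in the reconstruction maps. It is simpler to quote this from \cite{DKK}. The routes of $\Delta$ are the vertices of a full-dimensional simplex of the framing triangulation, so they cannot all lie in the hyperplane $x_e=0$. The paper also takes this fact for granted when it defines cornering routes.
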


Since $r^\up_\c(\Delta)$ and $r^\down_\c(\Delta)$ coincide after $\c$, they define a unique left-cornered route with corner $\c$, which we call the \mydef{left-cornered route 
$\sbar_\c(\Delta)$
of $\Delta$ at corner $\c$}. We write $\Sigma_L(\Delta)$ for the collection of all left-cornered routes of $\Delta$. We omit the proof of the following result, which is very similar to the one of~\cref{thm:bijection_bricks}. 

\begin{proposition} \label{prop:left-cornering-inverse}
	The set $\Sigma_L(\Delta)$ forms a left-cornered clique. The \mydef{left-cornering map} $\Sigma_L$ is the inverse map of the second reconstruction step $\Psi_L$ defined in~\cref{def:second_reconstruction}. 
\end{proposition}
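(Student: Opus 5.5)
We prove three things: $\Sigma_L(\Delta)$ is a left-cornered clique, $\Psi_L\circ\Sigma_L=\mathrm{id}$ on maximal cliques, and $\Sigma_L\circ\Psi_L=\mathrm{id}$ on left-cornered cliques.

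\textbf{Step 1: $\Sigma_L(\Delta)$ is a left-cornered clique.} By \cref{lem:cornering-routes}, each $\sbar_\c(\Delta)$ is a well-defined left-cornered route with corner $\c$, and there is exactly one per left corner.

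For pairwise coherence, take two corners $\c\neq\c'$. The routes $\sbar_\c(\Delta)$ and $\sbar_{\c'}(\Delta)$ do not share a left corner, and they share no right corner since they are cornered only on the left. Suppose they crossed at some vertex $v$ with $\sbar_\c(\Delta)<^-_v\sbar_{\c'}(\Delta)$ and $\sbar_\c(\Delta)>^+_v\sbar_{\c'}(\Delta)$. The incoming comparison at $v$ is decided either strictly inside the common part or at a corner. In the first case, $r^\up_\c(\Delta)$ and $r^\down_\c(\Delta)$ agree with $\sbar_\c(\Delta)$ there, so either of them is incoherent with $r^\up_{\c'}(\Delta)$ or $r^\down_{\c'}(\Delta)$ at $v$, contradicting coherence of $\Delta$. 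In the second case the comparison happens at $\c^\bullet$ (or $\c'^\bullet$). Since the corner $\c$ sits strictly between $\c^\down$ and $\c^\up$, we can choose $r^\down_\c(\Delta)$ or $r^\up_\c(\Delta)$ so that the strict incoming inequality is preserved. The outgoing inequality is unchanged because the suffixes coincide. This again gives a crossing inside $\Delta$.

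Maximality follows from the count: a left-cornered clique has exactly one route per left corner, and we have one per corner.

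\textbf{Step 2: $\Psi_L(\Sigma_L(\Delta))=\Delta$.} Here we mimic the proof of \cref{thm:bijection_bricks}. First we check that every $r\in\Delta$ arises from a unique $\d$. If $r$ is the upper cornering route $r^\up_\c(\Delta)$ for some corner $\c$, we take $\d=\c$; this $\c$ is unique by \cref{lem:cornering-routes}. Otherwise we take $\d$ to be the sink edge of $r$. Counting the left corners and sink edges shows this assignment is a bijection $\Delta\to\{\d\}$.

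Next we show that $r=\Psi_L(\Sigma_L(\Delta),\d)$. The route $r$ contains $p_0$ and is coherent with all of $\Sigma_L(\Delta)$, since it is coherent with both cornering routes. It remains to show that $r$ is the $\leq^-$-smallest such route. Suppose at some step the greedy choice $e$ is strictly smaller than the edge of $r$ at $v_i$. Then we find a corner $\c$ at $v_i$ with $\c^\up$ at most the edge of $r$ there. Using that $r^\up_\c(\Delta)$ is the smallest route of $\Delta$ through $\c^\up$, and that the prefixes are consecutive by \cref{lem:cornering-routes}, the edge of $r$ just below (namely $\c^\down$) is forbidden only if $r=r^\up_\c(\Delta)$ shares the suffix. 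That sharing would force the greedy construction to stop. Making this precise by induction on the steps is the \textbf{main obstacle}: we must show that the constraint "$p_i\ge^+_{v_i}\tbar\Rightarrow e\ge^-_{v_i}\tbar$" is tight exactly at the corners where $r$ is the upper cornering route.

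\textbf{Step 3: $\Sigma_L(\Psi_L(LT))=LT$.} Since $\Sigma_L$ is now known to be injective, it suffices to show that $\Psi_L$ lands in maximal cliques. Pairwise coherence follows because each reconstructed route is the $\leq^-$-minimal extension compatible with $LT$. For two reconstructed routes, take the first vertex where they cross when read leftward; there one of the two greedy choices could have been made smaller, contradicting minimality. Maximality then follows from the count $|E|-|V|+s+t$.

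Finally we check the corners directly. For a left corner $\c$, the route $\Psi_L(LT,\c)$ uses $\c^\up\t_\c$. The route obtained from the greedy extension through $\c^\down$ is the one reconstructed from the next corner or sink edge along $\t_\c$. So these two routes are the upper and lower cornering routes at $\c$, and their common suffix is $\t_\c$. This gives $\Sigma_L(\Psi_L(LT))=LT$.
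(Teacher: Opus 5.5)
Your architecture (coherence of $\Sigma_L(\Delta)$, then $\Psi_L\circ\Sigma_L=\mathrm{id}$ by labelling each route of $\Delta$ with a corner or sink edge, then $\Sigma_L\circ\Psi_L=\mathrm{id}$) is the one the paper intends; it says the omitted proof mirrors that of \cref{thm:bijection_bricks}. Your Step~1 is fine.

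The genuine gap is the core of Step~2, which you leave open. You look at the right corner, but the contradiction does not come from the greedy construction stopping. Here is the missing argument. Let $r\in\Delta$ have label $\d$, and suppose $q$ contains $p_0$, is coherent with $\Sigma_L(\Delta)$, and satisfies $q<^-r$. Let $u$ be the first vertex, reading leftward, where $q$ and $r$ differ, with incoming edges $f<g$ respectively. Let $\c$ be the left corner at $u$ with $\c^\up=g$.
\begin{itemize}
\item Since $f\le\c^\down$, we have $q<^-_u\sbar_\c(\Delta)$, so coherence forces $q\le^+_u\sbar_\c(\Delta)$.
\item Since $r$ uses $\c^\up$, we have $r^\up_\c(\Delta)\le r$, which gives $\sbar_\c(\Delta)\le^+_u r=^+_u q$.
\item Hence $r^\up_\c(\Delta)$, and therefore $r^\down_\c(\Delta)$, has the same suffix after $u$ as $r$.
\item So $r^\down_\c(\Delta)\in\Delta$ contains $p_0$ and satisfies $r^\down_\c(\Delta)<^-r$.
\end{itemize}
If $\d$ is a corner, this contradicts $r=r^\up_\d(\Delta)$. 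If $\d$ is a sink edge, it contradicts the fact that $r$ is the $\le^-$-smallest route of $\Delta$ through $\d$. You need to establish that fact, and it is exactly what your ``counting shows'' skips. The smallest route of $\Delta$ through a sink edge is never an upper cornering route, because $r^\down_\c(\Delta)<^-r^\up_\c(\Delta)$ and both end with the same sink edge. This makes your labelling surjective, hence bijective by the count.

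In Step~3, ``since $\Sigma_L$ is injective, it suffices to show that $\Psi_L$ lands in maximal cliques'' is a non sequitur. The identity $\Psi_L\circ\Sigma_L=\mathrm{id}$ does not exclude $\Psi_L(LT)=\Psi_L(LT')$ when $LT$ lies outside the image of $\Sigma_L$. So everything rests on your direct corner check, and its key claim is false. Take the oruga graph with $n=3$. Write $x_i<y_i$ for the top and bottom edges from $i-1$ to $i$, let $\c_i=(i,x_i,y_i)$ for $i=1,2$, and let $\Delta=\{x_1x_2x_3,\,y_1x_2x_3,\,y_1y_2x_3,\,y_1y_2y_3\}$. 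Then $LT=\Sigma_L(\Delta)=\{\c_1x_2x_3,\,\c_2x_3\}$.
\begin{itemize}
\item The lower cornering route at $\c_2$ is $y_1x_2x_3=\Psi_L(LT,\c_1)$, reconstructed from a corner to the \emph{left} of $\c_2$.
\item The route reconstructed from the sink edge $x_3=\t_{\c_2}$ is $x_1x_2x_3$, which is not a cornering route at $\c_2$.
\end{itemize}
The fix is to avoid identifying these routes at all. Every route of $\Psi_L(LT)$ is coherent with $\c\,\t_\c$. The upper cornering route of $\Psi_L(LT)$ at $\c$ enters through $\c^\up>\c$, so its suffix is $\ge^+\t_\c$. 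The lower one enters through $\c^\down<\c$, so its suffix is $\le^+\t_\c$. By \cref{lem:cornering-routes} these suffixes coincide, so both equal $\t_\c$.

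This still requires $\Psi_L(LT)$ to be a maximal clique, and two further points are missing there. Your count silently assumes the routes $\Psi_L(LT,\d)$ are pairwise distinct. Your exchange argument for pairwise coherence also omits the case where one route's $p_0$ extends past the divergence vertex; that case is settled by coherence with $\tbar_\d$, not by minimality.
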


Recall that the outgoing order at $\c^\bullet$ is a total order on all left-cornered routes with left corner $\c$.
We define the \mydef{left cubical coordinate} $\CCL(\Delta, \c)$ of $\Delta$ at corner $\c$ as the corank of $\sbar_\c(\Delta)$ in the outgoing order.
The name is justified by the following theorem.

\begin{theorem}
	\label{thm:cubical-coordinates}
	Let $\Delta \lessdot \Delta'$ be a covering relation in the framing lattice $\mathcal L_\Gamma$.
	The left cubical coordinates of $\Delta$ and $\Delta'$ differ at a single corner $\c$, with $\CCL(\Delta,\c) < \CCL(\Delta',\c)$.
\end{theorem}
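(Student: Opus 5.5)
Write $\Delta' = \Delta \setminus \{r\} \cup \{r'\}$ with the notation of \cref{prop:adjacent_cliques}, so that $r$ is clockwise to $r'$ at their unique subroute of conflict $\s$. Write $r = r_1 \s r_2$ and $r' = r'_1 \s r'_2$. By \cref{lem:cov_bricks}, the last edges of $r_1$ and $r'_1$ form a left corner $\c_1$. The plan is to show three things: the left-cornered routes $\sbar_\c(\Delta)$ and $\sbar_\c(\Delta')$ agree for every left corner $\c \neq \c_1$; at $\c_1$ they differ; and the corank strictly increases.

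\textbf{Step 1: only corners involving $r$ change.} The cornering routes $r^\up_\c$ and $r^\down_\c$ are determined by the totally ordered set of routes of the clique through the edges $\c^\up$ and $\c^\down$. Replacing $r$ by $r'$ can only change the cornering routes at corners $\c$ where $r$ or $r'$ is one of them.

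I will show that no such $\c$ other than $\c_1$ is affected. This uses the routes $r_1\s r'_2$ and $r'_1\s r_2$, which lie in $\Delta\cap\Delta'$. Consider a left corner $\c$ on the common prefix part of $r$, that is, on $r_1$ before its last edge. There the route $r_1\s r'_2$ shares the prefix of $r$. The route $r$ is clockwise at $\s$, and we compare in the incoming order, i.e. on prefixes. From this, $r$ and $r_1\s r'_2$ are adjacent in the relevant totally ordered set, so whichever of them is a cornering route, the left-cornered route (which only records the suffix after $\c$) is replaced by one with the same suffix. Symmetrically, on $r'_1$ the route $r'_1\s r_2$ plays the same role for $r'$.

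The delicate point is making precise that the left-cornered route at $\c$ only depends on the suffix after $\c$. By \cref{lem:cornering-routes}, $r^\up_\c$ and $r^\down_\c$ coincide after $\c$, and it is this common suffix that is recorded. So the check reduces to showing that the suffix after $\c$ is unchanged, which I would verify by a short case analysis on whether $\c^\bullet$ lies on $r_1$, on $\s$, or on $r_2$.

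\textbf{Step 2: the corner $\c_1$.} In $\Delta$, the routes $r$ and $r'_1\s r_2$ use the two edges of $\c_1$. Since $r$ is clockwise to $r'$ at the start of $\s$, they are the two cornering routes at $\c_1$, and by \cref{lem:cornering-routes} both have suffix $\s r_2$. In $\Delta'$, the analogous pair is $r'$ and $r_1\s r'_2$, with suffix $\s r'_2$. Hence $\sbar_{\c_1}(\Delta) = \c_1\s r_2$ and $\sbar_{\c_1}(\Delta') = \c_1\s r'_2$. Since $r >^+ r'$ at the end of $\s$ (clockwise condition), we get $\sbar_{\c_1}(\Delta) >^+ \sbar_{\c_1}(\Delta')$, so the corank strictly increases, as claimed.

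\textbf{Main obstacle.} I expect the hardest part to be Step 1: rigorously excluding that a cornering route at some other corner switches between $r$ and $r'$. The approach is to use the noncrossing structure, namely that routes of a clique through a vertex are totally ordered and that $r$, $r'$, $r_1\s r'_2$ and $r'_1\s r_2$ form the ``square'' of \cref{prop:adjacent_cliques}. With this, at any corner $\c \neq \c_1$ the route of $\Delta\cap\Delta'$ that shares the relevant portion of $r$ (respectively $r'$) substitutes for it without changing the recorded suffix.
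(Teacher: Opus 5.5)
Your reduction is sound: the left-cornered route at a corner $\c$ can only change if $r$ is a cornering route of $\Delta$ at $\c$, or $r'$ is one of $\Delta'$. Step~2 is also correct, since the suffix of $r^\down_{\c_1}(\Delta)$ after $\c_1^\bullet$ is squeezed between those of $r$ and $r'_1\s r_2$, both equal to $\s r_2$, and symmetrically in $\Delta'$.

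The gap is Step~1, which carries the whole content of ``differ at a single corner''. Its proposed mechanism is wrong on the prefix side. At a corner $\c$ with $\c^\bullet$ on $r_1$, the square route $r_1\s r'_2$ shares the \textit{prefix} of $r$, not its suffix: after $\c^\bullet$ it reads $\cdots\s r'_2$, whereas $r$ reads $\cdots\s r_2$. Since $\sbar_\c$ records exactly the suffix after $\c^\bullet$, substituting $r_1\s r'_2$ for a cornering route $r$ would \textit{change} $\sbar_\c$. The same objection applies to $r'_1\s r_2$ versus $r'$ on $r'_1$. So ``replaced by one with the same suffix'' is false there, and the case analysis you defer cannot be completed along these lines.

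What is true, and what you actually need, is that $r$ is never a cornering route of $\Delta$ at such corners.
\begin{itemize}
\item $r\neq r^\up_\c(\Delta)$, because $r_1\s r'_2\in\Delta$ enters $\c^\bullet$ through the same edge and is smaller.
\item If $\c^\bullet$ lies strictly before the first vertex $u$ of $\s$, then $r\neq r^\down_\c(\Delta)$. By \cref{lem:cornering-routes}, its partner $r^\up_\c(\Delta)$ would coincide with $r$ after $\c$. It would then enter $u$ through $\c_1^\down$ with suffix $\s r_2$, hence be clockwise to $r'$ at $u$. So it would lie in $\Delta\setminus\Delta'=\{r\}$, which is absurd since $r$ uses $\c^\down$. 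At $u$ itself, $r=r^\down_\c(\Delta)$ forces $\c=\c_1$.
\item At vertices of $\s$ after $u$, $r$ lies strictly between $r_1\s r'_2$ and $r'_1\s r_2$, which use the same incoming edge, so again it is not a cornering route.
\item Apart from $\c_1$, $r$ can be a cornering route only at corners on $r_2$, and then necessarily as $r^\up_\c(\Delta)$. There $r'_1\s r_2$ does share its suffix. You must still rule out that $r'$, entering $\c^\bullet$ through $\c^\up$, becomes a strictly smaller cornering route of $\Delta'$. That step uses $r_1\s r'_2\in\Delta$ and the coherence of $r$ and $r'$ away from $\s$.
\end{itemize}
The mirror statements for $r'$ in $\Delta'$ are needed as well.

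The paper organizes the argument differently. It rewrites $\CCL(\Delta,\c')$ as the number of bricks with left corner $\c'$ that are left-clockwise to $\Delta$ (\cref{lem:left-incompatibility,lem:left-incompatibility-ideal}). It then shows this brick set is unchanged for $\c'\neq\c_1$, using \cref{lem:cornering-routes} and the square routes, while the labeling brick $\c_1\s\c_2$ is gained at $\c_1$. That formulation avoids computing the new cornering routes, but it still depends on knowing which routes of $\Delta$ and $\Delta'$ can be cornering routes at $\c'$.
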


In order to prove~\cref{thm:cubical-coordinates}, we introduce the notion of left-clockwise bricks, as analogues of inversions for the symmetric group. A brick $\sbar$ is \mydef{left-clockwise} to a route $p$ if $\sbar$ is clockwise to $p$ at its left corner $L(\sbar)$, i.e. if we have $\sbar <_v^- p$ and $\sbar >_v^+ p$ for $v = L(\sbar)^\bullet$.
The brick is left-clockwise to a clique $\Delta$ if it is left-clockwise to a route of $\Delta$. We have the following lemmas.

\begin{lemma}\label{lem:left-incompatibility}
	For a brick $\sbar$ with left corner $\c$ and a maximal clique $\Delta$, the following are equivalent:
	\begin{enumerate}
		\item $\sbar$ is left-clockwise to $\Delta$;
		\item $\sbar$ is larger than the left-cornered route $\sbar_\c(\Delta)$ of $\Delta$ in the outgoing order at $\c^\bullet$;
		\item $\sbar$ is left-clockwise to the upper cornering route $r^\up_\c(\Delta)$ of $\Delta$ at corner $\c$. 
	\end{enumerate}
\end{lemma}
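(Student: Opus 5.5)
The plan is to prove the cycle of implications (3) $\Rightarrow$ (1) $\Rightarrow$ (2) $\Rightarrow$ (3). The tool is the total order of the routes of $\Delta$ through $v=\c^\bullet$, together with \cref{lem:cornering-routes}. That lemma says $r^\down_\c(\Delta)$ and $r^\up_\c(\Delta)$ are consecutive in $\leq^-_v$ among routes of $\Delta$, and both coincide with $\sbar_\c(\Delta)$ after $v$.

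In the extended incoming order at $v$ we have $\c^\down < \c < \c^\up$. Hence any generalized route with left corner $\c$ (the brick $\sbar$, or $\sbar_\c(\Delta)$) sits strictly between $r^\down_\c(\Delta)$ and $r^\up_\c(\Delta)$ in $\leq^-_v$. Since these two routes are consecutive, every route $p\in\Delta$ through $v$ satisfies exactly one of the following:
\begin{itemize}[label={}]
\item $p \leq^-_v r^\down_\c(\Delta)$, in which case $p <^-_v \sbar$;
\item $p \geq^-_v r^\up_\c(\Delta)$, in which case $p >^-_v \sbar$.
\end{itemize}

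(3) $\Rightarrow$ (1) is immediate, since $r^\up_\c(\Delta)\in\Delta$. For (1) $\Rightarrow$ (2), suppose $\sbar$ is left-clockwise to some $p\in\Delta$, so $\sbar <^-_v p$ and $\sbar >^+_v p$. By the dichotomy, $p\geq^-_v r^\up_\c(\Delta)$. Routes in $\Delta$ are pairwise coherent at $v$, so $\geq^-_v$ among them forces $\geq^+_v$ weakly. Hence $p \geq^+_v r^\up_\c(\Delta)$, which equals $\sbar_\c(\Delta)$ in the outgoing order, because they coincide after $v$. Combining, $\sbar >^+_v p \geq^+_v \sbar_\c(\Delta)$, which is (2). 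For (2) $\Rightarrow$ (3), $\sbar >^+_v \sbar_\c(\Delta) =^+_v r^\up_\c(\Delta)$ gives $\sbar >^+_v r^\up_\c(\Delta)$. Also $\sbar <^-_v r^\up_\c(\Delta)$, since $\c <\c^\up$ in the extended incoming order, so their prefixes first differ at the corner position. This is exactly (3).

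I expect the main care to lie in two places. The first is the step ``$p\geq^-_v q$ implies $p\geq^+_v q$'' for coherent routes, including ties. When $p$ and $q$ agree on the prefix, coherence does not immediately order the suffixes. Here I would use that ties in $\leq^-_v$ can be broken by following the common subpath backward to the first divergence vertex and applying coherence there. Then I would argue that a strict inequality in one direction and an opposite strict inequality in the other yields a crossing at some vertex of the maximal common subpath. That crossing contradicts coherence. If $p$ and $r^\up_\c(\Delta)$ agree on all of both sides, then $p = r^\up_\c(\Delta)$ and the implication is trivial.

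The second place is making precise that the extended preorders compare a brick ending in a right corner with a route. The outgoing comparison of $\sbar$ with $\sbar_\c(\Delta)$ is strict in (2), so the right corner of $\sbar$ enters only through the extended order on $\out(\cdot)$. Strictness therefore transfers verbatim to $r^\up_\c(\Delta)$ and to $p$.
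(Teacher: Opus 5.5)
Your cycle (3)$\Rightarrow$(1)$\Rightarrow$(2)$\Rightarrow$(3) is the natural argument; the extended abstract states this lemma without proof. Several pieces are fine as written: the step (3)$\Rightarrow$(1), the step (2)$\Rightarrow$(3), and the dichotomy according to whether the edge of $p$ into $v=\c^\bullet$ is $\leq \c^\down$ or $\geq \c^\up$. The flaw is in how you justify $p \geq^+_v r^\up_\c(\Delta)$ in (1)$\Rightarrow$(2).

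The principle you invoke, that $p \geq^-_v q$ forces $p \geq^+_v q$ for coherent routes, is false when $p =^-_v q$. Coherence at $v$ says nothing about two routes with identical prefixes. For instance, two routes that share their whole prefix, split at $v$ and never meet again are coherent, with either one larger in $\leq^+_v$. Your proposed repair does not reach this case. The preorder $\leq^-_v$ already compares entire prefixes lexicographically, so a tie means the prefixes coincide back to a source, and there is no divergence vertex to follow back to. Your closing remark only disposes of the case where the suffixes also coincide. So the case ``same prefix as $r^\up_\c(\Delta)$, different suffix'' is left open by your argument.

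What closes it is not coherence but the minimality built into \cref{def:cornering-routes}. The routes of $\Delta$ through $v$ are totally ordered by an order refining both $\leq^-_v$ and $\leq^+_v$, and $r^\up_\c(\Delta)$ is the least of those entering through $\c^\up$. Since $\sbar <^-_v p$, the edge of $p$ into $v$ is at least $\c^\up$ in the framing. There are two cases:
\begin{itemize}
\item If that edge is strictly larger than $\c^\up$, then $p >^-_v r^\up_\c(\Delta)$ strictly, and coherence at $v$ gives $p \geq^+_v r^\up_\c(\Delta)$.
\item If that edge equals $\c^\up$, then $p \geq^+_v r^\up_\c(\Delta)$ directly by minimality.
\end{itemize}
This is exactly the fact the paper invokes (``by \cref{def:cornering-routes}, $r \geq^+_v r^\up_\c(\Delta)$'') in the proof of \cref{thm:componentwise_comparison}. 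With this substitution, the chain $\sbar >^+_v p \geq^+_v r^\up_\c(\Delta) =^+_v \sbar_\c(\Delta)$ holds, and the rest of your proof goes through unchanged.
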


\begin{lemma}\label{lem:left-incompatibility-ideal}
	The set of bricks with left corner $\c$ that are left-clockwise to a clique $\Delta$ is an upper ideal in the outgoing order at $\c^\bullet$, and their number is equal to $\CCL(\Delta,\c)$.
\end{lemma}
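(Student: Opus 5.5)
By \cref{lem:left-incompatibility}, a brick $\sbar$ with left corner $\c$ is left-clockwise to $\Delta$ if and only if $\sbar >^+_{\c^\bullet} \sbar_\c(\Delta)$. The plan is to use this characterization to reduce both claims to a statement about the outgoing order at $\c^\bullet$ on left-cornered routes and bricks with corner $\c$, which does not involve $\Delta$ at all.

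\textbf{Upper ideal.} This is immediate. The extended outgoing order at $\c^\bullet$ is a total order on all generalized routes starting with corner $\c$. The bricks in question are exactly those strictly above the fixed element $\sbar_\c(\Delta)$. Any brick with corner $\c$ that is larger than one of them is, by transitivity, also larger than $\sbar_\c(\Delta)$.

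\textbf{Counting.} Recall that $\CCL(\Delta,\c)$ is the number of left-cornered routes with corner $\c$ strictly above $\sbar_\c(\Delta)$ in the outgoing order. It therefore suffices to show that, in this total order, left-cornered routes and bricks with corner $\c$ alternate. That is, the sequence starts and ends with a left-cornered route, and exactly one brick lies between any two consecutive left-cornered routes. Granting this, the left-cornered route $\sbar_\c(\Delta)$ has as many bricks above it as left-cornered routes above it, which gives the count.

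To prove the alternation, I would use the plane tree $\mathcal T$ of all paths starting at $\c^\bullet$. Its children at each vertex are ordered by the framing on outgoing edges, and its leaves are the paths ending at sinks. Since every internal vertex has at least one outgoing edge, these leaves are exactly the left-cornered routes with corner $\c$. Moreover, the outgoing (lexicographic) order on them is the left-to-right order of the leaves of $\mathcal T$.

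A brick $\c\,\s\,\d$, with right corner $\d$ at $v'$, corresponds to a node $\s$ of $\mathcal T$ together with a pair of consecutive children $\d^\down,\d^\up$. By the extended order $\d^\down < \d < \d^\up$, this brick lies above every leaf below $\s\d^\down$ and below every leaf below $\s\d^\up$. Comparison with leaves not below $\s$ is decided before reaching $v'$, exactly as for those leaves. Hence the brick sits precisely between the largest leaf of the subtree at $\s\d^\down$ and the smallest leaf of the subtree at $\s\d^\up$.

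Conversely, two consecutive leaves of a plane tree diverge at a unique node, and they do so via consecutive children there. This gives a bijection between bricks and consecutive pairs of leaves, and hence the alternation.

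The main obstacle is this last verification: checking carefully that the extended lexicographic preorder places a brick strictly between the correct two leaves and nowhere else. In particular, ties in the preorder must not occur between a brick and a leaf. I would settle this by comparing the words symbol by symbol up to the divergence vertex.
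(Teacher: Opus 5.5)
Your proof is correct. The extended abstract states this lemma without proof, so I can only assess your argument on its own terms, not compare it to a written one.

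The upper-ideal claim is exactly what condition (2) of \cref{lem:left-incompatibility} gives, read inside the set of bricks with left corner $\c$, as you do. For the count, your alternation of left-cornered routes and bricks in the total outgoing order at $\c^\bullet$ is a good statement to prove. It does not depend on $\Delta$, so it settles the count for every clique at once. As a by-product it shows there is exactly one fewer brick than left-cornered route with corner $\c$.

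Three points deserve an explicit sentence in the write-up.

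First, bricks whose path $\s$ is trivial, with the right corner at $\c^\bullet$ itself, must be included. They are the pairs of consecutive children at the root of $\mathcal T$, and without them the alternation would break down at the root. The weak-order example confirms they are allowed: an arc from $i$ to $i+1$ becomes such a brick.

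Second, the absence of ties. The suffix of a left-cornered route cannot be a prefix of a brick's suffix, because it ends at a sink, which carries neither outgoing edges nor right corners. So the first discrepancy occurs at a common vertex, where the extended order on outgoing edges and right corners is total, and an edge is never equal to a corner.

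Third, every non-leaf node of $\mathcal T$ ends at an internal vertex: it is $\c^\bullet$ or has an incoming edge, and it has an outgoing edge. Hence the framing and the right corners you use are defined there.

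With these recorded, the argument is complete.
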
 

\begin{proof}[Proof of \cref{thm:cubical-coordinates}]
	Let $\sbar$ be the brick labeling a covering relation $\Delta \lessdot \Delta'$, with left corner $L(\sbar) = \c$.
	Let $r$ (resp. $r'$) be the route of $\Delta \setminus \Delta'$ (resp. $\Delta' \setminus \Delta$).
	The lower cornering route of $\Delta$ at $\c$ is $r = r^\down_{\c}(\Delta)$, and $\sbar$ is counterclockwise to $r$ but clockwise to $r'$, so by \cref{lem:left-incompatibility}, $\sbar$ is left-clockwise to $\Delta'$ but not $\Delta$. By \cref{lem:left-incompatibility-ideal}, we have $\CCL(\Delta, \c) < \CCL(\Delta', \c)$.

	Let $\tbar$ be a brick with left corner $\c' \neq \c$. 
	If $\tbar$ is left-clockwise to $\Delta$, then by \cref{lem:left-incompatibility}, it is incoherent with $p := r^\down_{\c'}(\Delta)$. Since $r = r^\down_{\c}(\Delta)$, $r \neq p$ by \cref{lem:cornering-routes}, so that $p \in \Delta'$ and $\tbar$ is left-clockwise to $\Delta'$.
	Conversely, reusing notations of~\cref{prop:adjacent_cliques}, $\Delta$ contains the two routes $r_1 \s r_2'$ and $r'_1 \s r_2$. 
	If $\tbar$ is left-clockwise to $r'$, it is left-clockwise to either $r_1 \s r_2'$ or $r'_1 \s r_2$, so $\tbar$ is left-clockwise to $\Delta$. 
	Hence, $\CCL(\Delta, \c') = \CCL(\Delta', \c')$ by \cref{lem:left-incompatibility-ideal}.
\end{proof}

The motivating example is that the Tamari lattice (and all type $A$ Cambrian lattices) can be realized as a framing lattice of the so-called caracol graph, 
which has vertex set $[0, n+2]$ and edges $(i, i+1)$ for all $i$, as well as edges $(0, i)$ and $(i, n+2)$ for $2 \leq i \leq n$. The framing that produces the Tamari lattice is the one where edges $(i, i+1)$ are always the smallest in the outgoing and incoming orders~\cite{Y1,vBC}.

\begin{example}\label{ex:cubical-coordinates-tamari}
	For the Tamari lattice on the framed caracol graph, 
	the left cubical coordinates recover precisely the classical bracket vectors of~\cite{HT}.
\end{example}

Not only do these cubical coordinates provide a nice embedding of framing lattices, but they also give an easy comparison criterion, as bracket vectors for the Tamari lattice.

\begin{proposition}[{\cite[Proposition 3.4]{vBC}}]\label{prop:comparison_CvB}
	Let $\Delta$ and $\Delta'$ be two maximal cliques of routes. We have $\Delta \not\leq \Delta'$ in the framing lattice if and only if a route of $\Delta'$ is clockwise to a route of $\Delta$ at a vertex $v$.
\end{proposition}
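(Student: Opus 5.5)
The plan is to prove the two implications separately. Throughout, ``$\Delta \leq \Delta'$'' means that $\Delta'$ is reached from $\Delta$ by a sequence of counterclockwise rotations, as in the definition of $\mathcal L_\G$. For a maximal clique $\Delta$, write $K(\Delta)$ for the set of routes $q$ that are clockwise, at some vertex, to some route of $\Delta$. The proposition then reads: $\Delta \leq \Delta'$ if and only if $\Delta' \cap K(\Delta) = \emptyset$.

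\emph{Step 1: $\Delta \leq \Delta'$ implies $\Delta'\cap K(\Delta)=\emptyset$.} I would first show that $K$ is monotone along covers: if $\Delta \lessdot \Delta'$ then $K(\Delta)\subseteq K(\Delta')$. In the notation of \cref{prop:adjacent_cliques}, the only route removed is $r = r_1\s r_2$, so it suffices to show that a route $q$ clockwise to $r$ at a vertex $v$ is clockwise to one of $r'$, $r_1\s r'_2$, $r'_1\s r_2$, all of which lie in $\Delta'$. The argument is a case analysis on where $v$ sits.
\begin{itemize}
\item If $v$ lies on $r_1$, compare $q$ with $r_1\s r'_2$: the prefixes at $v$ agree with those of $r$, and the suffixes differ from those of $r$ only after $\s$.
\item If $v$ lies on $r_2$, compare $q$ with $r'_1\s r_2$ in the same way.
\item If $v$ lies on $\s$, enlarge the subroute of conflict of $q$ and $r$ to a maximal common subpath, and compare its endpoints with $r'$ or with one of the two mixed routes.
\end{itemize}
Throughout, I would use that $r$ is clockwise to $r'$ at $\s$, so the suffix $r'_2$ lies below $r_2$ and the prefix $r'_1$ lies above $r_1$.

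This is the same kind of local bookkeeping as in the second half of the proof of \cref{thm:cubical-coordinates}, and it is the technically fiddly part of this direction. Once monotonicity holds, iterate it along a chain from $\Delta$ to $\Delta'$ to get $K(\Delta)\subseteq K(\Delta')$. No route of $\Delta'$ lies in $K(\Delta')$, because $\Delta'$ is pairwise coherent. Hence $\Delta'\cap K(\Delta)=\emptyset$.

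\emph{Step 2: $\Delta' \cap K(\Delta)=\emptyset$ implies $\Delta\leq\Delta'$.} I would argue by induction on $\lvert\Delta\setminus\Delta'\rvert$; the case $0$ is trivial. If $\Delta\neq\Delta'$, I want a route $r\in\Delta\setminus\Delta'$ whose flip in $\Delta$ is a counterclockwise rotation $\Delta\lessdot\Delta_1$ such that $\Delta'\cap K(\Delta_1)=\emptyset$ still holds, with $r'\in\Delta'$ if possible so that the induction parameter drops.

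By maximality of $\Delta'$, every $r\in\Delta\setminus\Delta'$ is incoherent with some $q\in\Delta'$, and by hypothesis $r$ must be the clockwise one. Among such $r$, I would pick one that is extremal, for instance maximal in the incoming order at the first vertex where it conflicts with $\Delta'$. The flip of $r$ in $\Delta$ exists and is unique, since every non-maximal clique of size one less lies in exactly two maximal cliques. The step I expect to be the main obstacle is checking that this flip is counterclockwise and that the new route $r'$ creates no clockwise crossing with $\Delta'$.

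For this I would use \cref{prop:adjacent_cliques}. Since $r_1\s r'_2$ and $r'_1\s r_2$ are coherent with $r$, the new route $r'$ is obtained by ``uncrossing'' $r$ toward the counterclockwise side. Routes of $\Delta'$ are counterclockwise to $r$ near the conflict, so they cannot become counterclockwise to $r'$, which means no route of $\Delta'$ is clockwise to $r'$. If $\lvert\Delta\setminus\Delta'\rvert$ does not strictly decrease, I would instead induct on a potential that decreases under each such rotation, such as the length of the longest chain from $\Delta$ to the top element. Either way the induction closes and yields $\Delta\leq\Delta'$.
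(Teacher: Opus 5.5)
The paper does not prove this statement; it imports it from von Bell--Ceballos, so your argument has to stand on its own.

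\textbf{Step 1 (the ``if'' direction) is correct.} It is also simpler than your case analysis suggests:
\begin{enumerate}
\item If $v$ lies on $r_1\s$, the route $r_1\s r'_2\in\Delta\cap\Delta'$ has the same prefix as $r$ at $v$ and satisfies $r_1\s r'_2<^+_v r$.
\item If $v$ lies on $\s r_2$, the route $r'_1\s r_2$ has the same suffix as $r$ at $v$ and satisfies $r'_1\s r_2>^-_v r$.
\end{enumerate}
Transitivity of the local preorders then turns ``$q$ clockwise to $r$ at $v$'' into ``$q$ clockwise to a mixed route at $v$''. No separate case $v\in\s$ is needed, and no comparison with $r'$. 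One slip: what you actually use is $r_1<^- r'_1$ and $r_2>^+ r'_2$. With the paper's drawing conventions this means $r'_1$ lies \textit{below} $r_1$ and $r'_2$ lies \textit{above} $r_2$, the opposite of what you wrote.

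\textbf{The genuine gap is Step 2, the hard direction.} This is also the direction the paper uses in the proof of \cref{thm:componentwise_comparison}. Any sequence of upward covers terminates, so all the content lies in one claim: if $\Delta\neq\Delta'$ and no route of $\Delta'$ is clockwise to a route of $\Delta$, then $\Delta$ has an upper cover $(\Delta\setminus\{r\})\cup\{r'\}$ such that no route of $\Delta'$ is clockwise to $r'$. You assert this, but your sketch does not prove it.

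The route $r'$ replacing $r$ is dictated by $\Delta$, not by $\Delta'$, so everything hinges on the choice of $r$. Your selection rule (``maximal in the incoming order at the first vertex where it conflicts with $\Delta'$'') compares different routes at different vertices, so it is not well defined. Nothing in it relates to the direction of the flip.

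The choice really matters. On the oruga graph with $n=3$:
\begin{enumerate}
\item Take $\Delta'$ to be the top and $\Delta$ an atom. The route of $\Delta$ exchanged in the cover $\hat 0\lessdot\Delta$ lies in $\Delta\setminus\Delta'$, but its flip goes down.
\item Take $\Delta$ to be the bottom and $\Delta'$ of rank two. Both flippable routes of $\Delta$ lie in $\Delta\setminus\Delta'$ and both flips go up, yet one of them produces an atom not below $\Delta'$.
\end{enumerate}

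Your justification that $r'$ creates no bad crossing is a non sequitur, since being counterclockwise to $r'$ is harmless. The real danger is a route of $\Delta'$ that uses the last edge of $r_1$, then $\s$, then the first edge of $r_2$. Such a route is automatically clockwise to $r'$ at $\s$. Nothing in the hypothesis excludes it: it can be coherent with $r$ and clockwise to no route of $\Delta$. This is exactly what happens in the second example (e.g.\ $r=110$, $r'=101$, $q=010$).

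It is also false that every clique with one route fewer than a maximal clique lies in exactly two maximal cliques. Routes coherent with all routes, such as the two extreme routes of the oruga graph, belong to every maximal clique. This cannot happen for $r\notin\Delta'$, but the existence of its flip then needs its own argument.

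To complete the proof you need two things: an explicit rule that extracts from a crossing between $\Delta$ and $\Delta'$ an \textit{upward} flip of $\Delta$, and an actual verification, via \cref{prop:adjacent_cliques}, that no route of $\Delta'$ is clockwise to the new route.
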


\begin{theorem}\label{thm:componentwise_comparison}
	Let $\Delta$ and $\Delta'$ be two maximal cliques of routes. We have $\Delta \leq \Delta'$ in the framing lattice if and only if $\CCL(\Delta) \leq \CCL(\Delta')$ componentwise.
\end{theorem}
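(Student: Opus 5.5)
The forward direction follows from \cref{thm:cubical-coordinates}. If $\Delta \leq \Delta'$, choose a maximal chain $\Delta = \Delta_0 \lessdot \Delta_1 \lessdot \dots \lessdot \Delta_k = \Delta'$ in $\mathcal L_\G$. Along each covering relation, the left cubical coordinates change at exactly one corner, and there they strictly increase. Hence $\CCL(\Delta,\c) \leq \CCL(\Delta',\c)$ for every left corner $\c$.

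For the converse, we argue by contrapositive using \cref{prop:comparison_CvB}. Suppose $\Delta \not\leq \Delta'$. Then some route $q \in \Delta'$ is clockwise to some route $p \in \Delta$ at a vertex $v$. We want a left corner $\c$ with $\CCL(\Delta,\c) > \CCL(\Delta',\c)$. By \cref{lem:left-incompatibility-ideal}, this amounts to finding a brick $\sbar$ with left corner $\c$ that is left-clockwise to $\Delta$ but not to $\Delta'$. Indeed, the two sets of left-clockwise bricks at $\c$ are upper ideals of a total order, so one cannot be contained in the other.

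The natural candidate comes from the crossing itself. Let $\s$ be the subroute of conflict of $q$ and $p$ containing $v$, so $q <^-_v p$ and $q >^+_v p$. Just before $\s$, $q$ uses an incoming edge smaller than the one $p$ uses; just after $\s$, $q$ uses an outgoing edge larger than the one $p$ uses. I would first try the following choice, and the delicate part is making it work. Take the extremal crossing, namely the pair $(p,q)$ with $\s$ shortest, or equivalently a covering-type crossing, so that the incoming edges of $q$ and $p$ at $L(\s)$ are consecutive and likewise at $R(\s)$. This gives corners $\c_1,\c_2$ and a brick $\sbar = \c_1\s\c_2$. If consecutiveness fails, use coherence within $\Delta$ and within $\Delta'$ to replace $p$ or $q$ by routes of the same clique passing through the intermediate edges. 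The needed routes should be supplied by the cornering routes of \cref{def:cornering-routes} and \cref{lem:cornering-routes}.

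The brick $\sbar$ sits between $q$ and $p$: it is clockwise to $p$ at its left corner, since it leaves $\c_1$ going below $p$'s incoming edge and exits above toward $p$'s side. By \cref{lem:left-incompatibility} applied with route $p$, $\sbar$ is left-clockwise to $\Delta$. The main obstacle is the second half, showing that $\sbar$ is not left-clockwise to $\Delta'$, i.e. by \cref{lem:left-incompatibility} that $\sbar \leq^+_{\c_1^\bullet} \sbar_{\c_1}(\Delta')$. Here I would use that $q \in \Delta'$ passes through $\c_1^{\down}$ and agrees with $\sbar$ along $\s$, then exits with an edge at least $\c_2^\up$. The upper cornering route $r^\up_{\c_1}(\Delta')$ must be coherent with $q$, which forces its suffix to be $\geq^+$ that of $\sbar$. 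Failing that, the shortest-$\s$ choice should exclude a crossing of $r^\up_{\c_1}(\Delta')$ with $\sbar$, since such a crossing would give a shorter conflict between a route of $\Delta'$ and a route of $\Delta$. This minimality argument is the step I expect to need the most care.
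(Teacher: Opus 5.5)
Your argument is correct and is essentially the paper's. The paper also starts from \cref{prop:comparison_CvB}, moves to the first vertex $v$ of the conflict, picks any left corner $\c$ of $v$ strictly between the incoming edges of the two routes, and uses coherence inside each clique; the only difference is that it skips the intermediate brick and the two lemmas, and reads off $r^\up_\c(\Delta)\leq^+_v p<^+_v q\leq^+_v r^\down_\c(\Delta')$, i.e.\ $\sbar_\c(\Delta)<^+\sbar_\c(\Delta')$, directly from the definition of $\CCL$ as a corank.

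The shortest-conflict choice and the consecutiveness of edges are not needed, and you do not justify the claimed equivalence between them. Take any corners $\c_1,\c_2$ strictly between the edges of $q$ and $p$ at the two ends of $\s$. Then $r^\up_{\c_1}(\Delta')$ is strictly larger than $q$ in $\leq^-_{\c_1^\bullet}$, so coherence in $\Delta'$ gives $\sbar<^+_{\c_1^\bullet} q\leq^+_{\c_1^\bullet} r^\up_{\c_1}(\Delta')$, which already settles the step you flagged as the main obstacle. Also, the two upper ideals of left-clockwise bricks are nested rather than incomparable, and that nesting is exactly what makes the inequality strict.
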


\begin{proof}
	If $\Delta \leq \Delta'$ in $\mathcal L_\Gamma$, there is a sequence of covering relations from $\Delta$ to $\Delta'$, and by~\cref{thm:cubical-coordinates} we conclude that $\CCL(\Delta) \leq \CCL(\Delta')$ componentwise.
	
	Reciprocally, if  $\Delta \not\leq \Delta'$ in $\mathcal L_\Gamma$, by~\cref{prop:comparison_CvB} there is a route $r' \in \Delta'$ clockwise to a route $r \in \Delta$ at a vertex $v$. Taking $v$ minimal with this property, $r$ and $r'$ are not using the same incoming edge of $v$. Pick any left corner $\c$ of $v$ with $r' <^-_v \c <^-_v r$.
	Since $r'$ is clockwise to $r$ at $v$, we have $r <^+_v r'$. 
	By~\cref{def:cornering-routes}, $r \geq^+_v r^\up_\c(\Delta)$ and $r' \leq^+_v r^\down_\c(\Delta')$, so that $r^\up_\c(\Delta) <^+_v r^\down_\c(\Delta')$. Hence, $\CCL(\Delta, \c) > \CCL(\Delta', \c)$. 
\end{proof}

\begin{remark}\label{rem:inversions}
	Combining \cref{thm:componentwise_comparison,lem:left-incompatibility-ideal}, we obtain yet another description of comparison in the framing lattice as inclusion of sets of left-clockwise bricks, hence our claim that they play a role analogue to inversions for the weak order. However, the meet operation does not correspond to intersection of sets of left-clockwise bricks.
\end{remark}

\begin{example}\label{ex:cubical-coordinates-weak}
	Recall from~\cref{ex:weak_order} that the weak order on $\mathfrak{S}_n$ can be realized as a framing lattice on the oruga graph, 
	which has one left corner for each integer~\mbox{$i\in[n-1]$}.
	For a permutation $\pi \in \mathfrak{S}_n$, the left-cornered path $\sbar_i(\Delta_\pi)$ uses the top edge between $j$ and $j+1$ if and only if $(i, j+1)$ is an inversion of $\pi$. 
	The left cubical coordinate $\CCL(\Delta_\pi, i)$ is equal to $\sum_{(i, j) \in \operatorname{inv}(\pi)} 2^{n-j}$, where the sum runs over inversions $(i, j)$ of $\pi$ with $i < j$.

	The Lehmer code, which can be obtained by replacing $2^{n-j}$ by $1$ in the above formula, also produces a cubical embedding of the weak order in the sense of~\cref{thm:cubical-coordinates}, but componentwise comparison does not recover the weak order as in~\cref{thm:componentwise_comparison}.
\end{example}

In this section, we focused on left-cornered cliques, but of course all our constructions and results extend symmetrically to right-cornered cliques, allowing us to define right cubical coordinates, which give a different embedding of the framing lattice.
As a corollary of~\cref{thm:componentwise_comparison}, we obtain that the order dimension of a framing lattice is at most the number of left (resp. right) corners of $\G$---this bound is tight for the weak order.

We also remark that~\cref{prop:left-cornering-inverse} extends under up-down reflection.
Interestingly, the left-cornering map $\Sigma_L$ is invariant under up-down reflection.
Hence, $\Psi_L$ and $\Psi_L^*$ are inverse to the same, so they coincide as maps on maximal cliques, even though for a given corner $\c$, we always have $\Psi_L(LT, \c) \neq \Psi_L^*(LT, \c)$.  

\section{Concluding remarks}

We conclude this extended abstract with a few further directions of ongoing research.

\begin{question}
	Refine our description of the canonical join complex of a framing lattice into a model for the canonical complex, generalizing semi-noncrossing arc bidiagrams~\cite{DP}.
\end{question}

\begin{question}
	Study rowmotion on framing lattices using the map
		$\Psi_L \circ \Phi_L \circ \text{Upbricks}$.
\end{question}

\begin{question}
	Find a combinatorial description of the forcing order on the canonical join complex in terms of bricks, and use it to understand quotients of framing lattices.
\end{question}

\section*{Acknowledgements}
We are very grateful to Jean-Christophe Novelli, Wenjie Fang, and Éric Fusy for insightful discussions and comments on preliminary versions of this work.

\renewcommand{\emph}{\textit} 
\printbibliography

\end{document}

%% file: caracoldag.pdf_tex
\begingroup%
  \makeatletter%
  \providecommand\color[2][]{%
    \errmessage{(Inkscape) Color is used for the text in Inkscape, but the package 'color.sty' is not loaded}%
    \renewcommand\color[2][]{}%
  }%
  \providecommand\transparent[1]{%
    \errmessage{(Inkscape) Transparency is used (non-zero) for the text in Inkscape, but the package 'transparent.sty' is not loaded}%
    \renewcommand\transparent[1]{}%
  }%
  \providecommand\rotatebox[2]{#2}%
  \newcommand*\fsize{\dimexpr\f@size pt\relax}%
  \newcommand*\lineheight[1]{\fontsize{\fsize}{#1\fsize}\selectfont}%
  \ifx\svgwidth\undefined%
    \setlength{\unitlength}{449.17401123bp}%
    \ifx\svgscale\undefined%
      \relax%
    \else%
      \setlength{\unitlength}{\unitlength * \real{\svgscale}}%
    \fi%
  \else%
    \setlength{\unitlength}{\svgwidth}%
  \fi%
  \global\let\svgwidth\undefined%
  \global\let\svgscale\undefined%
  \makeatother%
  \begin{picture}(1,0.30031792)%
    \lineheight{1}%
    \setlength\tabcolsep{0pt}%
    \put(0.16249255,0.02736113){\color[rgb]{0,0,0}\makebox(0,0)[lt]{\lineheight{1.25}\smash{\begin{tabular}[t]{l}$\scriptstyle{a_2}$\end{tabular}}}}%
    \put(0.16249255,0.29051342){\color[rgb]{0,0,0}\makebox(0,0)[lt]{\lineheight{1.25}\smash{\begin{tabular}[t]{l}$\scriptstyle{a_1}$\end{tabular}}}}%
    \put(0.44746006,0.28858279){\color[rgb]{0,0,0}\makebox(0,0)[lt]{\lineheight{1.25}\smash{\begin{tabular}[t]{l}$\scriptstyle{b_1}$\end{tabular}}}}%
    \put(0.44745952,0.02736547){\color[rgb]{0,0,0}\makebox(0,0)[lt]{\lineheight{1.25}\smash{\begin{tabular}[t]{l}$\scriptstyle{b_2}$\end{tabular}}}}%
    \put(0.73242757,0.28858279){\color[rgb]{0,0,0}\makebox(0,0)[lt]{\lineheight{1.25}\smash{\begin{tabular}[t]{l}$\scriptstyle{c_1}$\end{tabular}}}}%
    \put(0.73242703,0.02736547){\color[rgb]{0,0,0}\makebox(0,0)[lt]{\lineheight{1.25}\smash{\begin{tabular}[t]{l}$\scriptstyle{c_2}$\end{tabular}}}}%
    \put(0.59755109,0.24702866){\color[rgb]{0,0,0}\makebox(0,0)[lt]{\lineheight{1.25}\smash{\begin{tabular}[t]{l}$\scriptstyle{3}$\end{tabular}}}}%
    \put(0.31258413,0.24702866){\color[rgb]{0,0,0}\makebox(0,0)[lt]{\lineheight{1.25}\smash{\begin{tabular}[t]{l}$\scriptstyle{2}$\end{tabular}}}}%
    \put(0.00980452,0.22921927){\color[rgb]{0,0,0}\makebox(0,0)[lt]{\lineheight{1.25}\smash{\begin{tabular}[t]{l}$\scriptstyle{1}$\end{tabular}}}}%
    \put(0.88251806,0.24702974){\color[rgb]{0,0,0}\makebox(0,0)[lt]{\lineheight{1.25}\smash{\begin{tabular}[t]{l}$\scriptstyle{4}$\end{tabular}}}}%
    \put(0,0){\includegraphics[width=\unitlength,page=1]{caracoldag.pdf}}%
  \end{picture}%
\endgroup%

%% file: trifecta.pdf_tex
\begingroup%
  \makeatletter%
  \providecommand\color[2][]{%
    \errmessage{(Inkscape) Color is used for the text in Inkscape, but the package 'color.sty' is not loaded}%
    \renewcommand\color[2][]{}%
  }%
  \providecommand\transparent[1]{%
    \errmessage{(Inkscape) Transparency is used (non-zero) for the text in Inkscape, but the package 'transparent.sty' is not loaded}%
    \renewcommand\transparent[1]{}%
  }%
  \providecommand\rotatebox[2]{#2}%
  \newcommand*\fsize{\dimexpr\f@size pt\relax}%
  \newcommand*\lineheight[1]{\fontsize{\fsize}{#1\fsize}\selectfont}%
  \ifx\svgwidth\undefined%
    \setlength{\unitlength}{1616.68395996bp}%
    \ifx\svgscale\undefined%
      \relax%
    \else%
      \setlength{\unitlength}{\unitlength * \real{\svgscale}}%
    \fi%
  \else%
    \setlength{\unitlength}{\svgwidth}%
  \fi%
  \global\let\svgwidth\undefined%
  \global\let\svgscale\undefined%
  \makeatother%
  \begin{picture}(1,0.34426767)%
    \lineheight{1}%
    \setlength\tabcolsep{0pt}%
    \put(0.23007472,0.33446403){\color[rgb]{0,0,0}\makebox(0,0)[lt]{\lineheight{1.25}\smash{\begin{tabular}[t]{l}$321$\end{tabular}}}}%
    \put(0.05193199,0.33446403){\color[rgb]{0,0,0}\makebox(0,0)[lt]{\lineheight{1.25}\smash{\begin{tabular}[t]{l}$231$\end{tabular}}}}%
    \put(0.05193215,0.17611521){\color[rgb]{0,0,0}\makebox(0,0)[lt]{\lineheight{1.25}\smash{\begin{tabular}[t]{l}$213$\end{tabular}}}}%
    \put(0.23007427,0.17611521){\color[rgb]{0,0,0}\makebox(0,0)[lt]{\lineheight{1.25}\smash{\begin{tabular}[t]{l}$312$\end{tabular}}}}%
    \put(0.05193184,0.01776639){\color[rgb]{0,0,0}\makebox(0,0)[lt]{\lineheight{1.25}\smash{\begin{tabular}[t]{l}$123$\end{tabular}}}}%
    \put(0.23007427,0.01776639){\color[rgb]{0,0,0}\makebox(0,0)[lt]{\lineheight{1.25}\smash{\begin{tabular}[t]{l}$132$\end{tabular}}}}%
    \put(0,0){\includegraphics[width=\unitlength,page=1]{trifecta.pdf}}%
  \end{picture}%
\endgroup%

%% file: coherence.pdf_tex
\begingroup%
  \makeatletter%
  \providecommand\color[2][]{%
    \errmessage{(Inkscape) Color is used for the text in Inkscape, but the package 'color.sty' is not loaded}%
    \renewcommand\color[2][]{}%
  }%
  \providecommand\transparent[1]{%
    \errmessage{(Inkscape) Transparency is used (non-zero) for the text in Inkscape, but the package 'transparent.sty' is not loaded}%
    \renewcommand\transparent[1]{}%
  }%
  \providecommand\rotatebox[2]{#2}%
  \newcommand*\fsize{\dimexpr\f@size pt\relax}%
  \newcommand*\lineheight[1]{\fontsize{\fsize}{#1\fsize}\selectfont}%
  \ifx\svgwidth\undefined%
    \setlength{\unitlength}{913.125bp}%
    \ifx\svgscale\undefined%
      \relax%
    \else%
      \setlength{\unitlength}{\unitlength * \real{\svgscale}}%
    \fi%
  \else%
    \setlength{\unitlength}{\svgwidth}%
  \fi%
  \global\let\svgwidth\undefined%
  \global\let\svgscale\undefined%
  \makeatother%
  \begin{picture}(1,0.10676304)%
    \lineheight{1}%
    \setlength\tabcolsep{0pt}%
    \put(0,0){\includegraphics[width=\unitlength,page=1]{coherence.pdf}}%
  \end{picture}%
\endgroup%

%% file: reconstruction.pdf_tex
\begingroup%
  \makeatletter%
  \providecommand\color[2][]{%
    \errmessage{(Inkscape) Color is used for the text in Inkscape, but the package 'color.sty' is not loaded}%
    \renewcommand\color[2][]{}%
  }%
  \providecommand\transparent[1]{%
    \errmessage{(Inkscape) Transparency is used (non-zero) for the text in Inkscape, but the package 'transparent.sty' is not loaded}%
    \renewcommand\transparent[1]{}%
  }%
  \providecommand\rotatebox[2]{#2}%
  \newcommand*\fsize{\dimexpr\f@size pt\relax}%
  \newcommand*\lineheight[1]{\fontsize{\fsize}{#1\fsize}\selectfont}%
  \ifx\svgwidth\undefined%
    \setlength{\unitlength}{1590.36499023bp}%
    \ifx\svgscale\undefined%
      \relax%
    \else%
      \setlength{\unitlength}{\unitlength * \real{\svgscale}}%
    \fi%
  \else%
    \setlength{\unitlength}{\svgwidth}%
  \fi%
  \global\let\svgwidth\undefined%
  \global\let\svgscale\undefined%
  \makeatother%
  \begin{picture}(1,0.68136307)%
    \lineheight{1}%
    \setlength\tabcolsep{0pt}%
    \put(0.08388931,0.56670288){\color[rgb]{0,0,0}\makebox(0,0)[lt]{\lineheight{1.25}\smash{\begin{tabular}[t]{l}$\scriptstyle{c}$\end{tabular}}}}%
    \put(0.08388931,0.6358261){\color[rgb]{0,0,0}\makebox(0,0)[lt]{\lineheight{1.25}\smash{\begin{tabular}[t]{l}$\scriptstyle{a}$\end{tabular}}}}%
    \put(0.16420911,0.63528083){\color[rgb]{0,0,0}\makebox(0,0)[lt]{\lineheight{1.25}\smash{\begin{tabular}[t]{l}$\scriptstyle{d}$\end{tabular}}}}%
    \put(0.16470465,0.56670411){\color[rgb]{0,0,0}\makebox(0,0)[lt]{\lineheight{1.25}\smash{\begin{tabular}[t]{l}$\scriptstyle{f}$\end{tabular}}}}%
    \put(0.16454408,0.60790436){\color[rgb]{0,0,0}\makebox(0,0)[lt]{\lineheight{1.25}\smash{\begin{tabular}[t]{l}$\scriptstyle{e}$\end{tabular}}}}%
    \put(0.24469378,0.63528144){\color[rgb]{0,0,0}\makebox(0,0)[lt]{\lineheight{1.25}\smash{\begin{tabular}[t]{l}$\scriptstyle{g}$\end{tabular}}}}%
    \put(0.2453545,0.56670472){\color[rgb]{0,0,0}\makebox(0,0)[lt]{\lineheight{1.25}\smash{\begin{tabular}[t]{l}$\scriptstyle{i}$\end{tabular}}}}%
    \put(0.25440835,0.60947602){\color[rgb]{0,0,0}\makebox(0,0)[lt]{\lineheight{1.25}\smash{\begin{tabular}[t]{l}$\scriptstyle{h}$\end{tabular}}}}%
    \put(0.32583947,0.63528083){\color[rgb]{0,0,0}\makebox(0,0)[lt]{\lineheight{1.25}\smash{\begin{tabular}[t]{l}$\scriptstyle{j}$\end{tabular}}}}%
    \put(0.32551341,0.60790436){\color[rgb]{0,0,0}\makebox(0,0)[lt]{\lineheight{1.25}\smash{\begin{tabular}[t]{l}$\scriptstyle{k}$\end{tabular}}}}%
    \put(0.08389423,0.60790436){\color[rgb]{0,0,0}\makebox(0,0)[lt]{\lineheight{1.25}\smash{\begin{tabular}[t]{l}$\scriptstyle{b}$\end{tabular}}}}%
    \put(0.20445145,0.62354481){\color[rgb]{0,0,0}\makebox(0,0)[lt]{\lineheight{1.25}\smash{\begin{tabular}[t]{l}$\scriptstyle{3}$\end{tabular}}}}%
    \put(0.12396708,0.62354481){\color[rgb]{0,0,0}\makebox(0,0)[lt]{\lineheight{1.25}\smash{\begin{tabular}[t]{l}$\scriptstyle{2}$\end{tabular}}}}%
    \put(0.03845151,0.61851452){\color[rgb]{0,0,0}\makebox(0,0)[lt]{\lineheight{1.25}\smash{\begin{tabular}[t]{l}$\scriptstyle{1}$\end{tabular}}}}%
    \put(0.28493581,0.62354512){\color[rgb]{0,0,0}\makebox(0,0)[lt]{\lineheight{1.25}\smash{\begin{tabular}[t]{l}$\scriptstyle{4}$\end{tabular}}}}%
    \put(0.36542078,0.62354543){\color[rgb]{0,0,0}\makebox(0,0)[lt]{\lineheight{1.25}\smash{\begin{tabular}[t]{l}$\scriptstyle{5}$\end{tabular}}}}%
    \put(0.04364729,0.41449388){\color[rgb]{0,0,0}\makebox(0,0)[lt]{\lineheight{1.25}\smash{\begin{tabular}[t]{l}$\scriptstyle{a}$\end{tabular}}}}%
    \put(0.04365189,0.38657214){\color[rgb]{0,0,0}\makebox(0,0)[lt]{\lineheight{1.25}\smash{\begin{tabular}[t]{l}$\scriptstyle{b}$\end{tabular}}}}%
    \put(0.09384183,0.39587038){\color[rgb]{0,0,0}\makebox(0,0)[lt]{\lineheight{1.25}\smash{\begin{tabular}[t]{l}$\scriptstyle{2}$\end{tabular}}}}%
    \put(0.04412747,0.24846662){\color[rgb]{0,0,0}\makebox(0,0)[lt]{\lineheight{1.25}\smash{\begin{tabular}[t]{l}$\scriptstyle{d}$\end{tabular}}}}%
    \put(0.04381769,0.2256028){\color[rgb]{0,0,0}\makebox(0,0)[lt]{\lineheight{1.25}\smash{\begin{tabular}[t]{l}$\scriptstyle{e}$\end{tabular}}}}%
    \put(0.09384183,0.23490104){\color[rgb]{0,0,0}\makebox(0,0)[lt]{\lineheight{1.25}\smash{\begin{tabular}[t]{l}$\scriptstyle{3}$\end{tabular}}}}%
    \put(0.04406423,0.124942){\color[rgb]{0,0,0}\makebox(0,0)[lt]{\lineheight{1.25}\smash{\begin{tabular}[t]{l}$\scriptstyle{f}$\end{tabular}}}}%
    \put(0.04381707,0.14511782){\color[rgb]{0,0,0}\makebox(0,0)[lt]{\lineheight{1.25}\smash{\begin{tabular}[t]{l}$\scriptstyle{e}$\end{tabular}}}}%
    \put(0.09384121,0.15441606){\color[rgb]{0,0,0}\makebox(0,0)[lt]{\lineheight{1.25}\smash{\begin{tabular}[t]{l}$\scriptstyle{3}$\end{tabular}}}}%
    \put(0.04324662,0.09208883){\color[rgb]{0,0,0}\makebox(0,0)[lt]{\lineheight{1.25}\smash{\begin{tabular}[t]{l}$\scriptstyle{g}$\end{tabular}}}}%
    \put(0.09384121,0.0739317){\color[rgb]{0,0,0}\makebox(0,0)[lt]{\lineheight{1.25}\smash{\begin{tabular}[t]{l}$\scriptstyle{4}$\end{tabular}}}}%
    \put(0.04397427,0.28666355){\color[rgb]{0,0,0}\makebox(0,0)[lt]{\lineheight{1.25}\smash{\begin{tabular}[t]{l}$\scriptstyle{c}$\end{tabular}}}}%
    \put(0.04365251,0.30608716){\color[rgb]{0,0,0}\makebox(0,0)[lt]{\lineheight{1.25}\smash{\begin{tabular}[t]{l}$\scriptstyle{b}$\end{tabular}}}}%
    \put(0.09384244,0.3153854){\color[rgb]{0,0,0}\makebox(0,0)[lt]{\lineheight{1.25}\smash{\begin{tabular}[t]{l}$\scriptstyle{2}$\end{tabular}}}}%
    \put(0.04414313,0.04527156){\color[rgb]{0,0,0}\makebox(0,0)[lt]{\lineheight{1.25}\smash{\begin{tabular}[t]{l}$\scriptstyle{i}$\end{tabular}}}}%
    \put(0.13871194,0.15108669){\color[rgb]{0,0,0}\makebox(0,0)[lt]{\lineheight{1.25}\smash{\begin{tabular}[t]{l}$\scriptstyle{\Phi_L}$\end{tabular}}}}%
    \put(0.49183956,0.64534203){\color[rgb]{0,0,0}\makebox(0,0)[lt]{\lineheight{1.25}\smash{\begin{tabular}[t]{l}$\scriptstyle{j}$\end{tabular}}}}%
    \put(0.4965441,0.54754086){\color[rgb]{0,0,0}\makebox(0,0)[lt]{\lineheight{1.25}\smash{\begin{tabular}[t]{l}$\scriptstyle{k}$\end{tabular}}}}%
    \put(0.47370116,0.44797614){\color[rgb]{0,0,0}\makebox(0,0)[lt]{\lineheight{1.25}\smash{\begin{tabular}[t]{l}$\scriptstyle{h}$\end{tabular}}}}%
    \put(0.13871194,0.07060202){\color[rgb]{0,0,0}\makebox(0,0)[lt]{\lineheight{1.25}\smash{\begin{tabular}[t]{l}$\scriptstyle{\Phi_L}$\end{tabular}}}}%
    \put(0.13871194,0.23157136){\color[rgb]{0,0,0}\makebox(0,0)[lt]{\lineheight{1.25}\smash{\begin{tabular}[t]{l}$\scriptstyle{\Phi_L}$\end{tabular}}}}%
    \put(0.13871194,0.31205603){\color[rgb]{0,0,0}\makebox(0,0)[lt]{\lineheight{1.25}\smash{\begin{tabular}[t]{l}$\scriptstyle{\Phi_L}$\end{tabular}}}}%
    \put(0.13871194,0.3925407){\color[rgb]{0,0,0}\makebox(0,0)[lt]{\lineheight{1.25}\smash{\begin{tabular}[t]{l}$\scriptstyle{\Phi_L}$\end{tabular}}}}%
    \put(0.58154296,0.07060202){\color[rgb]{0,0,0}\makebox(0,0)[lt]{\lineheight{1.25}\smash{\begin{tabular}[t]{l}$\scriptstyle{\Psi_L}$\end{tabular}}}}%
    \put(0.58154311,0.15108699){\color[rgb]{0,0,0}\makebox(0,0)[lt]{\lineheight{1.25}\smash{\begin{tabular}[t]{l}$\scriptstyle{\Psi_L}$\end{tabular}}}}%
    \put(0.5815428,0.23157136){\color[rgb]{0,0,0}\makebox(0,0)[lt]{\lineheight{1.25}\smash{\begin{tabular}[t]{l}$\scriptstyle{\Psi_L}$\end{tabular}}}}%
    \put(0.58154265,0.31205603){\color[rgb]{0,0,0}\makebox(0,0)[lt]{\lineheight{1.25}\smash{\begin{tabular}[t]{l}$\scriptstyle{\Psi_L}$\end{tabular}}}}%
    \put(0.5815428,0.392541){\color[rgb]{0,0,0}\makebox(0,0)[lt]{\lineheight{1.25}\smash{\begin{tabular}[t]{l}$\scriptstyle{\Psi_L}$\end{tabular}}}}%
    \put(0.58154311,0.47302567){\color[rgb]{0,0,0}\makebox(0,0)[lt]{\lineheight{1.25}\smash{\begin{tabular}[t]{l}$\scriptstyle{\Psi_L}$\end{tabular}}}}%
    \put(0.58154296,0.55351034){\color[rgb]{0,0,0}\makebox(0,0)[lt]{\lineheight{1.25}\smash{\begin{tabular}[t]{l}$\scriptstyle{\Psi_L}$\end{tabular}}}}%
    \put(0.58154296,0.63399501){\color[rgb]{0,0,0}\makebox(0,0)[lt]{\lineheight{1.25}\smash{\begin{tabular}[t]{l}$\scriptstyle{\Psi_L}$\end{tabular}}}}%
    \put(0,0){\includegraphics[width=\unitlength,page=1]{reconstruction.pdf}}%
  \end{picture}%
\endgroup%